
\documentclass{daj}

\usepackage{amssymb,amsmath,amscd,amsfonts,amsthm,mathrsfs,color,booktabs}
\usepackage{graphicx}
\usepackage{tikz}

\newcommand\numberthis{\addtocounter{equation}{1}\tag{\theequation}} 

\newcommand{\defn}{\emph}
\newcommand{\eb}{\partial}
\newcommand{\ebopt}{\partial^*}
\newcommand{\vb}{\partial_v}
\newcommand{\vbopt}{\partial_v^*}
\newcommand{\hist}{\mathbf}
\newcommand{\push}[2]{{#1}_\hist{#2}}
\newcommand{\cbound}[1]{C_{|\hist x|}}
\newcommand{\cube}[1]{\mathcal Q_{#1}}
\newcommand{\lineseg}[2]{[0,{#1}_{#2}]}

\DeclareMathOperator*{\E}{\mathbb E}

\DeclareMathOperator{\vol}{vol}

\newcommand{\bb}[1]{\mathbb{#1}}

\newtheorem{theorem}{Theorem}
\newtheorem{proposition}[theorem]{Proposition}
\newtheorem{lemma}[theorem]{Lemma}

\newtheorem{question}{Question}
\newtheorem*{question*}{Question}

\theoremstyle{definition}

\theoremstyle{remark}
\newtheorem{remark}[theorem]{Remark}

\dajAUTHORdetails{%
  title = {Isoperimetry in Integer Lattices}, 
  author = {Ben Barber and Joshua Erde},
  plaintextauthor = {Ben Barber and Joshua Erde},
    %
    %
    %
    %
    %
   %
}   

\dajEDITORdetails{%
   year={2018},
   volume={XX},
   number={7},
   received={24 July 2017},   
   revised={9 March 2018},    
   published={20 April 2018},  
   doi={10.19086/da.3555},       
}   

\begin{document}

\begin{frontmatter}[classification=text]


\author[benbarber]{Ben Barber}
\author[joshuaerde]{Joshua Erde\thanks{Supported by the Alexander von Humboldt Foundation}}

\begin{abstract}
The edge isoperimetric problem for a graph $G$ is to determine, for each $n$, the minimum number of edges leaving any set of $n$ vertices.
In general this problem is NP-hard, but exact solutions are known in some special cases, for example when $G$ is the usual integer lattice.
We solve the edge isoperimetric problem asymptotically for every Cayley graph on $\mathbb Z^d$.
The near-optimal shapes that we exhibit are zonotopes generated by line segments corresponding to the generators of the Cayley graph.
\end{abstract}
\end{frontmatter}

\section{Introduction}

For every space equipped with notions of size and boundary of subsets there is a corresponding isoperimetric problem: how small can the boundary be over all subsets of a given size?
For example, the classical isoperimetric theorem states that the measurable subset of $\mathbb R^d$ with  minimum boundary for a given volume is an appropriate scaling of the unit ball.
Isoperimetric problems can also be posed for graphs, where they are closely related to the phenomenon of expansion.
Isoperimetric inequalities measure how easy it is to separate a set of vertices from the rest of the graph, which in turn can be related to the mixing time of Markov chains \cite{JS89}, or performance of error correcting codes \cite{SS96}.
There are two commonly studied isoperimetric problems on graphs, corresponding to two natural definitions of the boundary of a set of vertices.
It will be convenient to state the definitions for directed graphs; for undirected graphs consider the directed graph obtained by replacing each edge by a pair of edges oriented in opposite directions.
Given a directed graph $G$, the \defn{edge boundary} of a set $S \subseteq V(G)$ is
\[
\eb(S) = \eb_G(S)= |\{(u,v) \in E(G) \, : \, u \in S,  v \in V(G) \setminus S\}|,
\]
and the \defn{vertex boundary} is
\[
\vb(S) = \partial_{v,G}(S) = | \{ v \in V(G) \setminus S \, : \, \text{there exists } u \in S \text{ such that } (u,v) \in E(G)\}|.
\]
That is, the edge boundary is the number of edges leaving $S$, and the vertex boundary is the number of vertices we can reach by following these edges.
Thus we always have the inequalities
\begin{equation} \label{differ-by-a-constant}
\vb(S) \leq \eb(S) \leq \Delta_{\text{in}}(G)\vb(S),
\end{equation}
where $\Delta_{\text{in}}(G)$ is the maximum in-degree of any vertex in $G$.
We write
\[
\ebopt(n) = \min_{S \subseteq V(G) \, : \, |S| = n} \eb(S), \qquad\qquad 
\vbopt(n) = \min_{S \subseteq V(G) \, : \, |S| = n} \vb(S)
\]
for the minimum size of the edge or vertex boundary over all subsets of size $n$.
The edge (respectively, vertex) isoperimetric problem on $G$ is to determine the function $\ebopt$ (respectively, $\vbopt$).
Solving either of these problems for a general graph $G$ is NP-hard \cite{GJS1976,BV2009}, but results are known in several special cases where $G$ has a lot of structure.
The approximate shape of the optimal sets for some families of graphs are listed in Table~\ref{table}.

\begin{table}[t]
\begin{center}
\begin{tabular}{ccc}
\toprule
Graph & Edge-optimal shapes & Vertex-optimal shapes \\
\midrule
$\mathcal Q_d$ & subcubes \cite{H1964,L1964,B1967,H1976} & Hamming balls \cite{H1966} \\
$(\mathbb Z^d, l_1)$ & cubes ($l_\infty$-balls) \cite{BL19912} & cross-polytopes ($l_1$-balls) \cite{WW1977} \\
$(\mathbb Z^d, l_\infty)$ &  & cubes \cite{RV2012} \\
\bottomrule
\end{tabular}
\caption{Approximate shapes of optimal sets for isoperimetric problems on certain families of graphs.} \label{table}
\end{center}
\end{table}

The $d$-dimensional hypercube $\cube d$ is the graph on vertex set $\{0, 1\}^d$ with edges between those pairs of binary strings that differ in a single coordinate. With a coding theory application in mind, the edge isoperimetric problem was solved by Harper \cite{H1964}, Lindsey \cite{L1964}, Bernstein \cite{B1967} and Hart \cite{H1976}.
The optimal sets include $k$-dimensional subcubes obtained by fixing $d-k$ coordinates and allowing the rest to take all $2^k$ possible values.
The corresponding vertex isoperimetric problem was solved by Harper \cite{H1966}.
The optimal sets include Hamming balls: for each $w$, the sets of strings with at most $w$ coordinates equal to $1$. This illustrates a typical feature of isoperimetric problems on graphs: the optimal shapes for each type of boundary are very different.

Isoperimetric problems have also been studied for many grid-like graphs. 
Let $(\mathbb Z^d, l_1)$ be the graph on vertex set $\mathbb Z^d$ with edges between pairs of vertices at $l_1$-distance~$1$.
Wang and Wang \cite{WW1977} showed that the optimal sets for the vertex isoperimetric problem on this graph include $l_1$-balls consisting of all vertices with $l_1$-norm at most $w$.
They also proved the same result for the restriction of this graph to $(\bb{N}^d,l_1)$.
Bollob\'as and Leader \cite{BL1991} showed that these sets also remain optimal when restricted to the finite grids $([k]^d,l_1)$ (where we write $[k] = \{1, \ldots, k\}$) generalising Harper's result for $\mathcal Q_d$.
The edge isoperimetric problem on $(\bb{Z}^d,l_1)$ was solved by Bollob\'as and Leader \cite{BL19912};
the optimal shapes include cubes.
(An asymptotic solution follows from the Loomis--Whitney inequality \cite{LW49}.)
More recently, Radcliffe and Veomett \cite{RV2012} solved the vertex isoperimetric problem in the $l_{\infty}$-grid $(\bb{Z}^d,l_{\infty})$, where two points are adjacent if they are at $l_{\infty}$-distance~$1$. 
Again, the optimal shapes include cubes.

For each of the preceding results the authors solved the isoperimetric problem exactly.
In fact, they found an ordering $v_1, v_2, \ldots$ of the vertex set such that, for each $n$, the set $\{v_1, \ldots, v_n\}$ has boundary of minimum size.
These orderings remain consistent as the dimension $d$ varies, in the following sense. Write $G_d$ for either $\cube d$, $(\mathbb Z^d, l_1)$ or $(\bb{Z}^d,l_{\infty})$.
Then viewing $G_d$ as a subgraph of $G_{d+1}$ in the natural way, the optimal order for $G_{d+1}$ restricts to an optimal ordering for $G_d$.
This allows each of the preceding results to be proved using `compression' techniques: for more on compressions see \cite{BL1991} or \cite{F1987}.

 Bollob\'as and Leader \cite{BL19912} also considered the edge isoperimetric problem on $([k]^d,l_1)$. Here, when the number of vertices being considered is a large fraction of the size of the grid, the character of the optimal sets changes as edge effects come into play: for example, the half-grid $[k]^{d-1} \times [k/2]$ has a smaller edge boundary in $([k]^d,l_1)$ than a cube containing the same number of points. More seriously, the transition between such qualitatively different optimal sets is not smooth. In particular, there is no ordering of the vertex set for which every initial segment is optimal; this seems to rule out the use of compression techniques.

Instead, Bollob\'as and Leader used the following strategy.
They first approximated the edge isoperimetric problem in $([k]^d,l_1)$ by a continuous problem concerning projections of subsets of $[0,1]^d$. They then solved this problem exactly, obtaining an approximate solution to the original edge isoperimetric theorem. The same strategy has been applied by Harper \cite{H1999} to the vertex isoperimetry problem in the graph on $[k]^d$ with edges between points that differ in exactly one coordinate (those at distance $1$ in the \defn{Hamming metric}).
Again, the optimal solutions are not nested.

Our starting point is the following question:

\begin{question*}
What is the solution to the edge isoperimetric problem on $(\bb{Z}^d,l_{\infty})$?
\end{question*}

An answer to this question would fill the gap in Table~\ref{table}.
In fact we will prove a more general result: we will solve the edge isoperimetric problem asymptotically for every Cayley graph on $\mathbb Z^d$.

\subsection{Cayley graphs}

Let $G$ be a group and let $\mathcal U$ be a generating set for $G$ that does not contain the identity.
The (\defn{directed}) \defn{Cayley graph} $G_{\mathcal U}$ has vertex set $G$ and edge set $\{(g, ug) : g \in G, u \in \mathcal U\}$.
We shall always take $G$ to be $(\mathbb Z^d, +)$ for some $d$ and $\mathcal U = \{u_1, \ldots, u_k\}$ to be a finite set of non-zero vectors.
Then $G_{\mathcal U}$ is a lattice-like graph on $\mathbb Z^d$ in which the neighbourhood of the origin is $\mathcal U$ and the neighbourhood of each other vertex is obtained by translation.
This construction includes both families of lattice graphs considered earlier:
taking $\mathcal U = \{(\pm1, 0, \ldots, 0), \ldots, (0, \ldots, 0, \pm 1)\}$ produces $(\mathbb Z^d, l_1)$; 
taking $\mathcal U = \{-1, 0, 1\}^d \setminus \{(0,\ldots,0)\}$ produces $(\bb{Z}^d,l_{\infty})$.

For subsets $A, B$ of any abelian group, the \defn{sumset} or \defn{Minkowski sum} of $A$ and $B$ is
\[
 A + B = \{a + b \,:\, a \in A, b \in B\}.
\]
We also write
\[
nA = \underbrace{A + \cdots + A}_{n \text{ times}}
\] 
for an iterated sumset.
This is in general distinct from the dilation
\[
n \cdot A = \{na : a \in A\},
\]
but if $n$ is a positive integer and $A$ is a convex subset of $\mathbb R^d$ then the two notions coincide.

\begin{theorem} \label{t:main}
Let $\mathcal U = \{u_1, \ldots, u_k\}$ be a finite set of non-zero vectors that generate $\mathbb Z^d$ as a group.
Let $Z_0$ be the sumset $\{0, u_1\} + \{0, u_2\} +  \cdots + \{0, u_k\}$ and let $Z$ be the convex hull of $Z_0$ in $\mathbb R^d$.
For every $\delta > 0$, there is an $n_0 = n_0(\delta, \mathcal U)$ such that, for every $n \geq n_0$, 
\[
(1-\delta) d \vol(Z)^{1/d} n^{1-1/d} \leq \ebopt_{G_{\mathcal U}}(n) \leq (1+\delta) d \vol(Z)^{1/d} n^{1-1/d}.
\]
The upper bound is witnessed by intersections of scaled copies of $Z$ with $\mathbb Z^d$.
\end{theorem}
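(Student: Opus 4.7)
The plan is to reduce the discrete problem to a continuous isoperimetric inequality solved via Minkowski's mixed-volume theory. Write $\pi_u$ for orthogonal projection of $\mathbb{R}^d$ onto the hyperplane $u^\perp$. The starting point is the Cauchy-type identity $d\,V(K,\dots,K,[0,u]) = |u|\,\vol_{d-1}(\pi_u(K))$ for any convex body $K$ and $u \in \mathbb R^d \setminus \{0\}$. Summing over $u \in \mathcal{U}$ and using linearity of the mixed volume in each slot gives, for every convex body $K$,
\[
\sum_{u \in \mathcal{U}} |u|\,\vol_{d-1}(\pi_u(K)) \;=\; d\,V(K,\dots,K,Z).
\]
Setting $K = Z$ produces the zonotope identity $\sum_{u\in\mathcal U} |u|\vol_{d-1}(\pi_u(Z)) = d\vol(Z)$, and combining the general formula with Minkowski's inequality $V(K,\dots,K,Z) \ge \vol(K)^{(d-1)/d}\vol(Z)^{1/d}$ yields
\[
\sum_{u \in \mathcal{U}} |u|\,\vol_{d-1}(\pi_u(K)) \;\ge\; d\,\vol(K)^{(d-1)/d}\vol(Z)^{1/d},
\]
with equality iff $K$ is a homothet of $Z$.

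For the upper bound I would take $S = tZ \cap \mathbb{Z}^d$ with $t = (n/\vol(Z))^{1/d}$. A standard lattice-point count gives $|S| = t^d\vol(Z) + O(t^{d-1})$. Convexity of $tZ$ ensures that for each $u_j \in \mathcal U$ every coset of $\langle u_j\rangle$ meeting $S$ loses exactly one outgoing $u_j$-edge, and a covolume calculation shows this coset count is $|u_j|\vol_{d-1}(\pi_{u_j}(tZ)) + O(t^{d-2})$. Summing over $u_j$ and using the zonotope identity gives $\eb(S) = d\vol(Z)\,t^{d-1} + O(t^{d-2})$, matching the claimed upper bound to leading order.

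For the lower bound, start from the crude inequality $\eb(S) \ge \sum_j N_j(S)$, where $N_j(S)$ is the number of $\langle u_j\rangle$-cosets meeting $S$: each nonempty coset, being a finite subset of an infinite arithmetic progression, loses at least one outgoing edge. Thicken $S$ to $S^* = S + [0,1)^d \subseteq \mathbb R^d$, so that $\vol(S^*) = n$ and $N_j(S)\cdot\vol_{d-1}(\pi_{u_j}([0,1)^d))$ bounds $\vol_{d-1}(\pi_{u_j}(S^*))$ from above. Since $S^*$ is typically not convex, the continuous inequality above does not apply directly; instead I apply Brunn--Minkowski to $S^*$, $\vol(S^*+\epsilon Z)^{1/d} \ge \vol(S^*)^{1/d} + \epsilon \vol(Z)^{1/d}$, and expand the left-hand side by an iterated Steiner-type formula, one segment $[0,u_j]$ at a time, to recover a continuous bound of the required form for $S^*$ up to a $(1-\delta)$ factor.

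The main obstacle is precisely this last step. For non-convex $A$ one has $\vol(A + \epsilon[0,u_j]) = \vol(A) + \epsilon|u_j|\, N_{u_j}(A)$, where $N_{u_j}(A)$ counts the \emph{total} number of intervals of $A$ along lines in direction $u_j$---a quantity that can strictly exceed $\vol_{d-1}(\pi_{u_j}(A))$---so the extra contributions must either be shown to only increase the edge boundary of $S$ or be absorbed into the $(1-\delta)$ slack. A further bookkeeping complication is that some $u_j$ may fail to be primitive in $\mathbb Z^d$: then each lattice line in direction $u_j$ carries several $\langle u_j\rangle$-orbits, and the discrete--continuous dictionary requires correction by the index $[\mathbb R u_j \cap \mathbb Z^d : \mathbb Z u_j]$.
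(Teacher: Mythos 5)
Your lower bound is where the argument genuinely breaks down, and you have in effect conceded this yourself by labelling the key step ``the main obstacle'' without resolving it. The chain $\partial(S)\ge\sum_j N_j(S)$, thickening to $S^*=S+[0,1)^d$, and then Brunn--Minkowski plus an iterated Steiner expansion is precisely the heuristic the paper describes in Section~\ref{s:continuous-edge} and explains cannot be made precise directly. Two things go wrong. First, the discrete--continuous dictionary is lossy by a constant factor, not by $(1-\delta)$: the sharp correspondence is $N_j(S)\approx|u_j|\vol_{d-1}(\pi_{u_j}(S^*))$ (your own covolume computation for $tZ\cap\mathbb Z^d$ shows this), whereas the cube-projection bound you propose gives only $N_j(S)\ge\vol_{d-1}(\pi_{u_j}(S^*))/\vol_{d-1}(\pi_{u_j}([0,1)^d))$, which falls short of the sharp quantity by a factor $\|u_j\|_1$; this is exactly the ``projection of a unit cube varies between $1$ and $\sqrt d$'' problem the paper flags. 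Second, in the one-segment-at-a-time expansion of $\vol(S^*+\epsilon Z)$ the term for direction $u_j$ involves the interval counts of the already-fattened set $S^*+\epsilon[0,u_1]+\cdots+\epsilon[0,u_{j-1}]$, which need not be comparable to $N_j(S)$ or $\partial_j(S)$; this is the same term-by-term failure the paper identifies after \eqref{telescope} (one would need $\partial_2(S_1)\approx\partial_2(S)$). The paper's actual proof takes a different route that avoids any direct transfer: it suffices to bound $\partial(S)$ for $S$ that is $\epsilon$-close to optimal, and for such $S$ the push-out $S\mapsto S_i$ preserves near-optimality (Lemma~\ref{pushing-preserves-optimality}, via the comparison with $S\setminus F_i$), this iterates (Lemma~\ref{Sx}), one averages the telescoping identity over random push sequences of length $t$ so that by Chernoff the pushed set almost surely contains $S+\lceil(1-\eta)t/k\rceil Z_0$, and then Ruzsa's Theorem~\ref{t:ruzsa} is applied to the Cayley graph generated by $\lceil(1-\eta)t/k\rceil Z_0\setminus\{0\}$. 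None of these ingredients (use of near-optimality, the averaging trick, the appeal to the vertex-isoperimetric theorem) appears in your plan, so the lower bound is unproved.

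Your upper bound sketch is essentially sound and close in spirit to the paper's: the paper telescopes $\partial(Z(t))\le|Z(t+1)|-|Z(t)|$ and invokes Ehrhart, while you count cosets directly and use $\sum_j|u_j|\vol_{d-1}(\pi_{u_j}(Z))=d\vol(Z)$; these give the same leading term, and the non-primitivity worry you raise in fact cancels (each geometric line carries $h_j$ cosets of $\mathbb Z u_j$ while the projected lattice has covolume $h_j/|u_j|$). The one real loose end is that your witness set has size $n+O(n^{1-1/d})$ rather than exactly $n$, and deleting the surplus points arbitrarily can increase the edge boundary by $\Theta(n^{1-1/d})$, the same order as the main term; you need monotonicity of $\partial^*$ (the paper's Lemma~\ref{basics}(i), proved by deleting the maximiser of a generic linear functional) or an equivalent trimming argument to finish.
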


The set $Z$ is a `zonotope' in $\mathbb R^d$; see Remark~\ref{zonotopes} in Section~\ref{s:continuous-edge}.
If $\mathcal U$ fails to generate $\mathbb Z^d$ as a group, then $G_{\mathcal U}$ breaks into several connected components, each isomorphic to $G_{\mathcal U'}$ for some other set $\mathcal U'$ (possibly with a different value of $d$). Given any set of vertices of $G_{\mathcal U}$ it is easy to find a set of vertices in one of the components with the same size and edge boundary, so the condition that $\mathcal U$ generates $\mathbb Z^d$ as a group is not a serious restriction.

\begin{figure}[t]
\begin{center}
\begin{tikzpicture}%
	[x={(0.043036cm, -0.948593cm)},
	y={(0.378649cm, 0.305923cm)},
	z={(0.924539cm, -0.081136cm)},
	scale=0.300000,
	edge/.style={color=black, thick},
	axis/.style={color=black, thick, dotted}]
\draw[axis] (0,-9,0) -- (0,-20,0);
\draw[axis] (0, 0,9) -- (0, 0,13);
\draw[axis] (-9, 0, 0) -- (-13, 0, 0);
\draw[edge] (-9.00000, -3.00000, -1.00000) -- (-9.00000, -3.00000, 1.00000);
\draw[edge] (-9.00000, -3.00000, -1.00000) -- (-9.00000, -1.00000, -3.00000);
\draw[edge] (-9.00000, -3.00000, -1.00000) -- (-7.00000, -5.00000, -3.00000);
\draw[edge] (-9.00000, -3.00000, 1.00000) -- (-9.00000, -1.00000, 3.00000);
\draw[edge] (-9.00000, -3.00000, 1.00000) -- (-7.00000, -5.00000, 3.00000);
\draw[edge] (-9.00000, -1.00000, -3.00000) -- (-9.00000, 1.00000, -3.00000);
\draw[edge] (-9.00000, -1.00000, -3.00000) -- (-7.00000, -3.00000, -5.00000);
\draw[edge] (-9.00000, -1.00000, 3.00000) -- (-9.00000, 1.00000, 3.00000);
\draw[edge] (-9.00000, -1.00000, 3.00000) -- (-7.00000, -3.00000, 5.00000);
\draw[edge] (-9.00000, 1.00000, -3.00000) -- (-9.00000, 3.00000, -1.00000);
\draw[edge] (-9.00000, 1.00000, 3.00000) -- (-9.00000, 3.00000, 1.00000);
\draw[edge] (-9.00000, 1.00000, 3.00000) -- (-7.00000, 3.00000, 5.00000);
\draw[edge] (-9.00000, 3.00000, -1.00000) -- (-9.00000, 3.00000, 1.00000);
\draw[edge] (-9.00000, 3.00000, 1.00000) -- (-7.00000, 5.00000, 3.00000);
\draw[edge] (9.00000, 1.00000, 3.00000) -- (9.00000, -1.00000, 3.00000);
\draw[edge] (9.00000, 1.00000, 3.00000) -- (7.00000, 3.00000, 5.00000);
\draw[edge] (9.00000, -1.00000, 3.00000) -- (9.00000, -3.00000, 1.00000);
\draw[edge] (9.00000, -1.00000, 3.00000) -- (7.00000, -3.00000, 5.00000);
\draw[edge] (9.00000, -3.00000, 1.00000) -- (9.00000, -3.00000, -1.00000);
\draw[edge] (9.00000, -3.00000, 1.00000) -- (7.00000, -5.00000, 3.00000);
\draw[edge] (9.00000, -3.00000, -1.00000) -- (7.00000, -5.00000, -3.00000);
\draw[edge] (-7.00000, -5.00000, -3.00000) -- (-7.00000, -3.00000, -5.00000);
\draw[edge] (-7.00000, -5.00000, -3.00000) -- (-5.00000, -7.00000, -3.00000);
\draw[edge] (-7.00000, -5.00000, 3.00000) -- (-7.00000, -3.00000, 5.00000);
\draw[edge] (-7.00000, -5.00000, 3.00000) -- (-5.00000, -7.00000, 3.00000);
\draw[edge] (-7.00000, -3.00000, -5.00000) -- (-5.00000, -3.00000, -7.00000);
\draw[edge] (-7.00000, -3.00000, 5.00000) -- (-5.00000, -3.00000, 7.00000);
\draw[edge] (-7.00000, 3.00000, 5.00000) -- (-7.00000, 5.00000, 3.00000);
\draw[edge] (-7.00000, 3.00000, 5.00000) -- (-5.00000, 3.00000, 7.00000);
\draw[edge] (7.00000, 3.00000, 5.00000) -- (5.00000, 3.00000, 7.00000);
\draw[edge] (7.00000, -3.00000, 5.00000) -- (7.00000, -5.00000, 3.00000);
\draw[edge] (7.00000, -3.00000, 5.00000) -- (5.00000, -3.00000, 7.00000);
\draw[edge] (7.00000, -3.00000, -5.00000) -- (7.00000, -5.00000, -3.00000);
\draw[edge] (7.00000, -3.00000, -5.00000) -- (5.00000, -3.00000, -7.00000);
\draw[edge] (7.00000, -5.00000, 3.00000) -- (5.00000, -7.00000, 3.00000);
\draw[edge] (7.00000, -5.00000, -3.00000) -- (5.00000, -7.00000, -3.00000);
\draw[edge] (-5.00000, -7.00000, -3.00000) -- (-3.00000, -9.00000, -1.00000);
\draw[edge] (-5.00000, -7.00000, -3.00000) -- (-3.00000, -7.00000, -5.00000);
\draw[edge] (-5.00000, -7.00000, 3.00000) -- (-3.00000, -9.00000, 1.00000);
\draw[edge] (-5.00000, -7.00000, 3.00000) -- (-3.00000, -7.00000, 5.00000);
\draw[edge] (-5.00000, -3.00000, -7.00000) -- (-3.00000, -5.00000, -7.00000);
\draw[edge] (-5.00000, -3.00000, -7.00000) -- (-3.00000, -1.00000, -9.00000);
\draw[edge] (-5.00000, -3.00000, 7.00000) -- (-3.00000, -5.00000, 7.00000);
\draw[edge] (-5.00000, -3.00000, 7.00000) -- (-3.00000, -1.00000, 9.00000);
\draw[edge] (-5.00000, 3.00000, 7.00000) -- (-3.00000, 1.00000, 9.00000);
\draw[edge] (-5.00000, 3.00000, 7.00000) -- (-3.00000, 5.00000, 7.00000);
\draw[edge] (5.00000, 3.00000, 7.00000) -- (3.00000, 1.00000, 9.00000);
\draw[edge] (5.00000, -3.00000, 7.00000) -- (3.00000, -1.00000, 9.00000);
\draw[edge] (5.00000, -3.00000, 7.00000) -- (3.00000, -5.00000, 7.00000);
\draw[edge] (5.00000, -3.00000, -7.00000) -- (3.00000, -5.00000, -7.00000);
\draw[edge] (5.00000, -7.00000, 3.00000) -- (3.00000, -7.00000, 5.00000);
\draw[edge] (5.00000, -7.00000, 3.00000) -- (3.00000, -9.00000, 1.00000);
\draw[edge] (5.00000, -7.00000, -3.00000) -- (3.00000, -7.00000, -5.00000);
\draw[edge] (5.00000, -7.00000, -3.00000) -- (3.00000, -9.00000, -1.00000);
\draw[edge] (-3.00000, -9.00000, -1.00000) -- (-3.00000, -9.00000, 1.00000);
\draw[edge] (-3.00000, -9.00000, -1.00000) -- (-1.00000, -9.00000, -3.00000);
\draw[edge] (-3.00000, -9.00000, 1.00000) -- (-1.00000, -9.00000, 3.00000);
\draw[edge] (-3.00000, -7.00000, -5.00000) -- (-3.00000, -5.00000, -7.00000);
\draw[edge] (-3.00000, -7.00000, -5.00000) -- (-1.00000, -9.00000, -3.00000);
\draw[edge] (-3.00000, -7.00000, 5.00000) -- (-3.00000, -5.00000, 7.00000);
\draw[edge] (-3.00000, -7.00000, 5.00000) -- (-1.00000, -9.00000, 3.00000);
\draw[edge] (-3.00000, -5.00000, -7.00000) -- (-1.00000, -3.00000, -9.00000);
\draw[edge] (-3.00000, -5.00000, 7.00000) -- (-1.00000, -3.00000, 9.00000);
\draw[edge] (-3.00000, -1.00000, -9.00000) -- (-1.00000, -3.00000, -9.00000);
\draw[edge] (-3.00000, -1.00000, 9.00000) -- (-3.00000, 1.00000, 9.00000);
\draw[edge] (-3.00000, -1.00000, 9.00000) -- (-1.00000, -3.00000, 9.00000);
\draw[edge] (-3.00000, 1.00000, 9.00000) -- (-1.00000, 3.00000, 9.00000);
\draw[edge] (-3.00000, 5.00000, 7.00000) -- (-1.00000, 3.00000, 9.00000);
\draw[edge] (3.00000, 1.00000, 9.00000) -- (3.00000, -1.00000, 9.00000);
\draw[edge] (3.00000, 1.00000, 9.00000) -- (1.00000, 3.00000, 9.00000);
\draw[edge] (3.00000, -1.00000, 9.00000) -- (1.00000, -3.00000, 9.00000);
\draw[edge] (3.00000, -5.00000, 7.00000) -- (3.00000, -7.00000, 5.00000);
\draw[edge] (3.00000, -5.00000, 7.00000) -- (1.00000, -3.00000, 9.00000);
\draw[edge] (3.00000, -5.00000, -7.00000) -- (3.00000, -7.00000, -5.00000);
\draw[edge] (3.00000, -5.00000, -7.00000) -- (1.00000, -3.00000, -9.00000);
\draw[edge] (3.00000, -7.00000, 5.00000) -- (1.00000, -9.00000, 3.00000);
\draw[edge] (3.00000, -7.00000, -5.00000) -- (1.00000, -9.00000, -3.00000);
\draw[edge] (-1.00000, -9.00000, -3.00000) -- (1.00000, -9.00000, -3.00000);
\draw[edge] (-1.00000, -9.00000, 3.00000) -- (1.00000, -9.00000, 3.00000);
\draw[edge] (-1.00000, -3.00000, -9.00000) -- (1.00000, -3.00000, -9.00000);
\draw[edge] (-1.00000, -3.00000, 9.00000) -- (1.00000, -3.00000, 9.00000);
\draw[edge] (-1.00000, 3.00000, 9.00000) -- (1.00000, 3.00000, 9.00000);
\draw[edge] (3.00000, -9.00000, 1.00000) -- (3.00000, -9.00000, -1.00000);
\draw[edge] (3.00000, -9.00000, 1.00000) -- (1.00000, -9.00000, 3.00000);
\draw[edge] (3.00000, -9.00000, -1.00000) -- (1.00000, -9.00000, -3.00000);
\end{tikzpicture}
\caption{A near-optimal shape for the edge isoperimetric problem in $(\mathbb Z^3, l_\infty)$.  The dotted lines represent the coordinate axes.}
\label{fig:3dshape}
\end{center}
\end{figure}
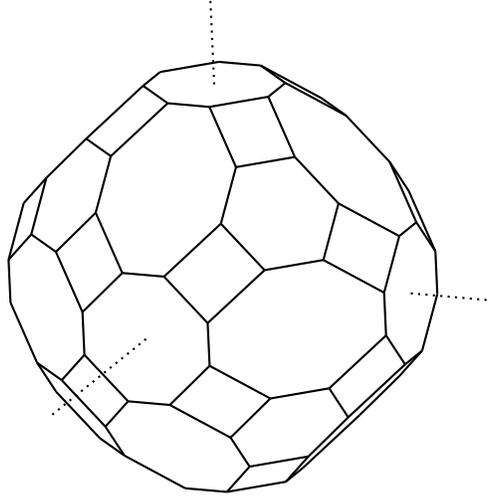

When $\mathcal U = \{(\pm1, 0, \ldots, 0), \ldots, (0, \ldots, 0, \pm 1)\}$, we have $Z = [-1, 1]^d$, so we recover an asymptotic version of the edge isoperimetric theorem for $(\mathbb Z^d, l_1)$.
When $\mathcal U = \{-1, 0, 1\}^d \setminus \{(0,\ldots,0)\}$, corresponding to $(\mathbb Z^d, l_\infty)$, the zonotope $Z$ is more complicated.
For $d=2$, it is an octagon obtained by cutting the corners off a square through points one third of the way along each side.
In this case there is in fact a nested sequence of optimal sets, which interpolate between discrete versions of this octagon.
This was proved by Brass \cite[Theorem~3]{B96} as part of his work on the Erd\H{o}s distance problem in $(\mathbb R^2, l_\infty)$.
The zonotope $Z$ for $d=3$ is shown in Figure~\ref{fig:3dshape}.
When $\mathcal U = \{\pm (1, 0), \pm (0, 1), \pm(1,1)\}$, $G_{\mathcal U}$ is isomorphic to the triangular lattice and $Z$ corresponds to a regular hexagon.
Harper \cite[Theorem~7.2]{H04} proved that for this lattice there is an optimal ordering interpolating between regular hexagons.
As far as we are aware, these are the only cases of Theorem~\ref{t:main} appearing in the literature.
We might also ask about the vertex isoperimetric problem on these more general lattices.
It turns out that this question has already been answered by Ruzsa \cite{R1995}.

\begin{theorem}[\cite{R1995}] \label{t:ruzsa}
Let $\mathcal U = \{u_1, \ldots, u_k\}$ be a finite set of non-zero vectors that generate $\mathbb Z^d$ as a group.
Let $U$ be the convex hull of $\mathcal U \cup \{0\}$ in $\mathbb R^d$.
For every $\delta > 0$, there is an $n_0 = n_0(\delta, \mathcal U)$ such that, for every $n \geq n_0$, 
\[
(1-\delta) d \vol(U)^{1/d} n^{1-1/d} \leq \partial_{v,G_{\mathcal U}}(n) \leq (1+\delta) d \vol(U)^{1/d} n^{1-1/d}.
\]
\end{theorem}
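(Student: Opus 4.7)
My plan is to follow the same two-step recipe used for Theorem~\ref{t:main}: reduce to a continuous isoperimetric problem in $\mathbb R^d$ and solve it using the Brunn--Minkowski inequality, then transfer the result back to the lattice.

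For the \textbf{upper bound}, take $A_r := rU \cap \mathbb{Z}^d$ with $r := (n/\vol(U))^{1/d}$. Standard lattice-point counting for convex bodies gives $|A_r| = (1+o(1))n$. Since $\mathcal U \subseteq U$ and $U$ is convex, $A_r + \mathcal{U} \subseteq rU + U = (r+1)U$, so
\[
\vb(A_r) \leq |((r+1)U) \cap \mathbb Z^d| - |rU \cap \mathbb Z^d| = ((r+1)^d - r^d)\vol(U) + o(r^{d-1}) = (1+o(1))\,d\,\vol(U)^{1/d}\,n^{(d-1)/d}.
\]
Adjusting $A_r$ by $o(n)$ vertices produces a set of exact size $n$ with only a lower-order boundary change.

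The continuous analogue for the \textbf{lower bound} comes from Brunn--Minkowski applied to $\tilde S := S + [-1/2,1/2]^d$ (a body of volume $n$): for every $t > 0$,
\[
\vol(\tilde S + tU)^{1/d} \geq n^{1/d} + t\vol(U)^{1/d}.
\]

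The \textbf{main obstacle} is transferring this to the discrete quantity $\vb(S) = |(S+\mathcal U) \setminus S|$, since the direct cube-packing inclusion gives only the \emph{upper} bound $\vb(S) \leq \vol(\tilde S + U) - n$. To reverse the direction I would use the hypothesis that $V := \mathcal U \cup \{0\}$ generates $\mathbb Z^d$ as a group, via a separate lemma: for every integer $k$, every lattice point of $kU$ at $\ell^\infty$-distance at least $C(\mathcal U)$ from $\partial(kU)$ lies in the $k$-fold Minkowski sum $kV$. This lemma would follow from a Carath\'eodory decomposition of $x/k \in U$ as a convex combination of at most $d+1$ elements of $V$ with rational coefficients having denominator $k$, followed by a rounding step that uses zero-sum relations among the generators to correct the $O(1)$ fractional errors. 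Combined with Brunn--Minkowski applied to $\tilde S + kU$ and careful accounting of boundary lattice points, this yields
\[
|S + kV| \geq n + dk\vol(U)^{1/d}n^{(d-1)/d}(1-\delta/2)
\]
for a suitable fixed $k = k(\delta, \mathcal U)$. Since the telescoping identity $|S+kV|-n = \sum_{j=0}^{k-1}\vb(S+jV)$ only bounds the average of $\vb(S+jV)$ over $j$, extracting a bound on $\vb(S)$ itself requires a further scaling-and-compactness step: a hypothetical sequence of $(1-\delta)$-violators $S_i$ with $|S_i| \to \infty$, together with a concentration argument showing that near-optima have diameter $O(|S|^{1/d})$, would rescale to a unit-volume subsequential limit $T_\infty \subseteq \mathbb R^d$ whose $U$-Minkowski content contradicts the continuous Brunn--Minkowski bound $\operatorname{Per}_U(T_\infty) \geq d\vol(U)^{1/d}$. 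The technical heart is in establishing this concentration and the semicontinuity of the relevant perimeter, and that is where I expect the real work to lie.
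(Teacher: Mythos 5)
The statement you are proving is not proved in the paper at all: it is Ruzsa's theorem, quoted from \cite{R1995}, and the paper only sketches his method in Section~\ref{s:continuous-vertex}. That sketch agrees with the first two-thirds of your plan: the upper bound via $rU\cap\mathbb Z^d$, and the lower bound via Brunn--Minkowski applied not to $S+\mathcal U_0$ directly but to $S+t\,\mathcal U_0$ for a large fixed $t$, using a ``filling'' lemma saying that $t\,\mathcal U_0$ occupies all lattice points of $t\cdot U$ away from a bounded boundary layer (your Carath\'eodory-plus-rounding lemma is essentially this, and is fine). The divergence is in the last step, which is also the crux: passing from a lower bound on $|S+kV|$ back to a lower bound on $\vb(S)=|S+V|-|S|$. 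Ruzsa does this with Pl\"unnecke's inequality: if $|S+V|\leq(1+x)|S|$ with $x$ small, Pl\"unnecke-type bounds give $|S'+kV|\leq(1+x)^k|S'|$ for a suitable $S'\subseteq S$, which for fixed $k$ contradicts the Brunn--Minkowski lower bound unless $x$ is essentially as large as claimed. As you correctly note, the telescoping identity only controls the average of $\vb(S+jV)$ over $j$, and nothing forces $\vb(S)$ to be comparable to that average, so some genuine additive-combinatorial input is needed here.

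Your proposed substitute --- a rescaling/compactness argument with a limit shape $T_\infty$ --- does not close this gap as described. First, the concentration claim that near-optimal sets have diameter $O(|S|^{1/d})$ is false as stated: adding a single far-away point to an optimal set changes $\vb$ by at most $k$ and the size by $1$, so near-optimizers can have arbitrarily large diameter; you would at least need a pruning/decomposition step. More seriously, to reach a contradiction with the sharp constant $d\vol(U)^{1/d}$ you need the anisotropic $U$-perimeter of the rescaled cube-filled sets $\tilde S_i$ to be bounded \emph{above} by $(1+o(1))\,\vb(S_i)/|S_i|^{1-1/d}$; lower semicontinuity of the perimeter under $L^1$ convergence gives only the comparison with that continuous quantity, not the comparison between the continuous quantity and the discrete vertex boundary. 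That discrete-to-continuous comparison is exactly the step the paper warns is not automatically sharp (the cube-packing inclusion goes the wrong way, as you yourself observe), and for rough sets the perimeter of $S+[-1/2,1/2]^d$ and $\vb_{G_{\mathcal U}}(S)$ differ by constant factors depending on $\mathcal U$. So the ``technical heart'' you defer is not a routine semicontinuity argument; it is the whole difficulty, and in Ruzsa's proof it is handled by Pl\"unnecke's inequality rather than by compactness. As it stands, your proposal establishes the upper bound and the continuous lower bound for $|S+kV|$, but not the stated lower bound on $\vb(S)$.
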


Ruzsa presents his result in the language of sumsets: for a fixed subset $B$ of $\mathbb Z^d$ such that $B - B$ generates $\mathbb Z^d$ as a group, he seeks to minimise $|S + B|$ over all subsets $S$ of $\mathbb Z^d$ of a given size.
The vertex boundary of a set $S$ in $G_{\mathcal U}$ can be expressed as
\[
\vb(S) = |S + (\mathcal U \cup \{0\})| - |S|,
\]
so, after translating $B$ so that it contains $0$, the two problems are easily seen to be equivalent.

The structures of the optimal sets in Theorems~\ref{t:main} and~\ref{t:ruzsa} do not seem promising for the use of compression techniques. To use compressions to prove that the shape $Z$ in Figure~\ref{fig:3dshape} is optimal for the edge isoperimetric problem in $(\bb{Z}^3,l_{\infty})$ we would like to take slices of our graph isomorphic to $(\bb{Z}^2,l_{\infty})$ and show that the intersection of each slice with $Z$ is itself optimal; 
but most cross-sections through $Z$ are not octagons of the correct shape.
To prove Theorem~\ref{t:ruzsa} Ruzsa instead solved a continuous approximation, then used combinatorial methods to show that the approximation was good.
We give a more detailed sketch in Section~\ref{s:continuous}.

We would like to take the same approach to proving Theorem~\ref{t:main}. The edge isoperimetric problem has a natural continuous analogue, and the solution suggests the correct statement of Theorem~\ref{t:main}. (Indeed, Theorem~\ref{t:main} has been conjectured independently by Tsukerman and Veomett \cite{TV2017}.) However, it is not clear that the continuous analogue is a good approximation to the original discrete problem. Instead, we will show that the edge isoperimetric problem can be related to the vertex isoperimetric problem in a different lattice.

In Section~\ref{s:continuous} we sketch the relationship between isoperimetric problems in $G_{\mathcal U}$ and their continuous analogues. In Section~\ref{s:proof} we prove Theorem~\ref{t:main}. Finally, in Section~\ref{s:open-problems} we discuss some open problems.

\section{Relation to continuous problems} \label{s:continuous}

In this section we indicate connections between isoperimetric problems in Cayley graphs on $\mathbb Z^d$ and classical results from convex geometry. For more background on the geometric results mentioned below, see for example the book of Schneider \cite{S13}.

\subsection{Vertex isoperimetry} \label{s:continuous-vertex}

Write $\mathcal U_0 = \mathcal U \cup \{0\}$ and recall that the vertex isoperimetric problem for $G_{\mathcal U}$ is equivalent to minimising $|S + \mathcal U_0|$ over all subsets $S$ of $\mathbb Z^d$ of a given size.
Given a subset $S$ of $\mathbb Z^d$, write $\bar S = S + [0,1]^d$ for the subset of $\mathbb R^d$  obtained by replacing each point of $S$ by a unit cube.
We hope that $\bar S$ is a good continuous approximation to $S$; for example, $|S| = \vol(\bar S)$.
We seek to bound $|S + \mathcal U_0|$ from below using the Brunn--Minkowski theorem.

\begin{theorem}[Brunn--Minkowski]
Let $A$ and $B$ be compact subsets of $\mathbb R^d$ with $\vol(A), \vol(B) > 0$.
Then $\vol(A + B)^{1/d} \geq \vol(A)^{1/d} + \vol(B)^{1/d}$, with equality if and only if $A$ and $B$ are convex and homothetic (that is, equal up to scaling and translation).
\end{theorem}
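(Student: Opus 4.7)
The plan is to prove Brunn--Minkowski via the classical Hadwiger--Ohmann box induction: establish the inequality first for axis-aligned boxes, extend to finite disjoint unions of such boxes by a slicing induction, and finally approximate arbitrary compact sets by finite unions of dyadic cubes.

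For a pair of boxes $A = \prod_i [0,a_i]$, $B = \prod_i [0,b_i]$, the sumset is itself a box, so $\vol(A+B) = \prod_i(a_i+b_i)$. Setting $\lambda_i = a_i/(a_i+b_i)$ and dividing through by $\vol(A+B)^{1/d}$, the inequality reduces to $(\prod_i \lambda_i)^{1/d} + (\prod_i (1-\lambda_i))^{1/d} \leq 1$, which is immediate from AM--GM applied to each product and summing. For the inductive step, suppose $A$ is a disjoint union of $m \geq 2$ boxes (inducting on the total box count of $A$ and $B$). Choose a coordinate hyperplane $H$ that strictly separates at least one box of $A$ from another, so $H$ splits $A$ into nonempty pieces $A^\pm$, each a union of strictly fewer boxes. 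Translate $B$ so that a parallel hyperplane slices it into $B^\pm$ with $\vol(B^+)/\vol(B) = \vol(A^+)/\vol(A)$; the sumsets $A^+ + B^+$ and $A^- + B^-$ then lie on opposite sides of the corresponding hyperplane through the origin and so contribute additively to $\vol(A+B)$. Applying the inductive hypothesis to each pair and simplifying using the matched volume ratio recovers the inequality for $A$ and $B$. To pass from finite unions of boxes to general compact sets, one approximates $A$ and $B$ from outside by finite unions of dyadic cubes and uses continuity of volume and of the Minkowski sum under Hausdorff limits.

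The main obstacle is the equality case. Equality in AM--GM at the base step already forces all $\lambda_i$ to coincide, so extremal boxes are homothetic; but propagating tightness through the induction is delicate, since one must verify that equality for $A,B$ forces equality in each subproblem after the slicing operation, and that this is compatible with the ratio-matching translation of $B$. To conclude the general equality statement, I would analyse the interpolation $f(t) = \vol((1-t)A + tB)^{1/d}$, which is concave on $[0,1]$ by the inequality applied to $(1-t)A$ and $tB$ together with homogeneity of volume; the equality case is equivalent to $f$ being affine, and a slicing argument in the spirit of Brunn's concavity principle then forces $A$ and $B$ to be convex and homothetic. I expect this last step, rather than the inequality itself, to be where the bulk of the technical work lies.
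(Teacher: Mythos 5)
The paper does not actually prove this statement: it is quoted as classical background (it is used only in the heuristic sketch of Ruzsa's argument in Section~\ref{s:continuous-vertex}, with Schneider's book cited for the general theory), so your proposal can only be measured against the standard literature. On that measure, the inequality half of your plan is essentially correct and is the classical Hadwiger--Ohmann argument: the AM--GM base case for a pair of boxes, the induction on the total number of boxes using a coordinate hyperplane that splits $A$ together with a ratio-matching slice of $B$, and the passage to compact sets via nested outer dyadic approximations. One point to tighten: Lebesgue measure is not continuous under Hausdorff limits in general (only upper semicontinuous), and with outer approximations the inclusion $A+B \subseteq A_n + B_n$ goes the wrong way for the bound you want; the correct conclusion comes from noting that the sets $A_n + B_n$ are nested with $\bigcap_n (A_n + B_n) = A + B$ (by compactness of $A$ and $B$) and invoking continuity of measure from above, together with $\vol(A_n) \to \vol(A)$ and $\vol(B_n) \to \vol(B)$.

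The genuine gap is the equality case, which is part of the statement as given. Equality in AM--GM at the box level does not propagate through the induction in any straightforward way (the approximating unions of cubes are never exactly extremal, so all tightness information is lost in the limit), and your fallback --- concavity of $f(t)=\vol((1-t)\cdot A+t\cdot B)^{1/d}$ plus ``a slicing argument in the spirit of Brunn's concavity principle'' --- is a statement of intent rather than an argument. Equality in the theorem does force $f$ to be affine on $[0,1]$, but the hard content is deducing from this that two general \emph{compact} sets of positive volume must be convex: for convex bodies the equality analysis is classical (e.g.\ via Minkowski's first inequality), whereas for non-convex compact sets one needs a Henstock--Macbeath-type argument showing that equality forces $\vol(\operatorname{conv}(A)\setminus A)=\vol(\operatorname{conv}(B)\setminus B)=0$, after which compactness gives $A=\operatorname{conv}(A)$ and $B=\operatorname{conv}(B)$, and only then does homothety follow from the convex-body case. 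None of that is present in your outline, and you acknowledge as much; as written, the proposal establishes the inequality but not the characterization of equality.
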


Heuristically we have
\begin{align*}
|S + \mathcal U_0| & \approx \vol(\bar S + \bar{\mathcal U}_0)
\\               & \geq (\vol(\bar S)^{1/d} + \vol(\bar{\mathcal U}_0)^{1/d})^d
\\               & \approx \vol(\bar S) + d \vol(\bar{\mathcal U}_0)^{1/d} \vol(\bar S)^{(d-1)/d}
\\               & = |S| + d \vol(\bar{\mathcal U}_0)^{1/d} |S|^{(d-1)/d},
\end{align*}
when $S$ is large. The main problem with this argument is that the inequality from the Brunn--Minkowski theorem will be weak if $\bar {\mathcal U}_0$ is far from a convex set.
To fix this, Ruzsa instead considers $|S + t\mathcal U_0|$ for some large, fixed $t$.
Since $\mathcal U_0$ generates $\mathbb Z^d$ as a group, for large $t$ the sumset $t\mathcal U_0$ `fills space' and is well approximated by its convex hull. Ruzsa completes his proof by using Pl\"unnecke's inequality \cite{P1970}, a result from additive combinatorics, to relate the size of $S + t\mathcal U_0$ to that of $S + \mathcal U_0$.

\subsection{Edge isoperimetry} \label{s:continuous-edge}

The first step to solving the vertex isoperimetric problem was to rephrase the problem in terms of sumsets.
We can attempt the same process here.
Fix a dimension $d$ and a set $\mathcal U$ as in the statement of Theorem~\ref{t:main}.
For $i \in [k]$ and $S \subset \mathbb Z^d$, let $S_i = S + \{0, u_i\}$ be the set obtained by `pushing out' in the $u_i$-direction. Then $\eb_i(S) = |S_i| - |S|$ is the contribution to the edge boundary made by edges leaving $S$ in the $u_i$-direction.
The continuous analogue is the `weighted surface area' of $\bar S$ in direction $u_i$; that is, the infinitesimal change in volume when we take an infinitesimal step in the $u_i$ direction.
More precisely, write $\lineseg u i$ for the line segment from $0$ to $u_i$ in $\mathbb R^d$.
Then we are interested in the quantity
\[
\lim_{\epsilon \to 0^+} \frac{\vol(\bar S + \epsilon \cdot \lineseg u i) - \vol(\bar S)} \epsilon.
\]
This is an instance of a geometric quantity known as a `mixed volume'.
Write $\mathcal K_d$ for the set of compact, convex, non-empty subsets of $\mathbb R^d$.
There is a unique function $V : \mathcal K_d^d \to \mathbb R_{\geq 0}$ with the following properties.
\begin{itemize}
\item $V$ is symmetric under permutations of its arguments.
\item $V(K, \ldots, K) = \vol(K)$.
\item $V$ is `linear' in each coordinate: 
\[
V(\alpha_1 \cdot L_1 + \alpha_2 \cdot L_2, K_2, \ldots, K_d) = \alpha_1 V(L_1, K_2, \ldots, K_d) + \alpha_2 V(L_2, K_2, \ldots, K_d).
\]
\end{itemize}
We call $V(K_1, \ldots, K_d)$ the \defn{mixed volume} of $K_1, \ldots, K_d$.

\begin{remark}[Zonotopes] \label{zonotopes}
The `linearity' property of the mixed volume leads to particular interest in convex bodies that can be expressed as Minkowski sums of simpler convex bodies.
The simplest non-trivial convex bodies are line segments; a \defn{zonotope} is any convex body that can be expressed as a Minkowski sum of line segments.
The shape $Z$ appearing in the statement of Theorem~\ref{t:main} is a zonotope, as the convex hull of $Z_0 = \{0, u_1\} + \cdots + \{0, u_k\}$ is the set
\[
\left\{\sum_{i=1}^k \lambda_i u_i \,:\, 0 \leq \lambda_i \leq 1 \text{ for each } i\right\} = \sum_{i=1}^k \lineseg u i.
\]
Examples of zonotopes include cubes (in any dimension) and centrally symmetric convex polygons in $\mathbb R^2$.
\end{remark}

\begin{theorem}[Minkowski]
Let $K_1, \ldots, K_r$ be compact, convex, non-empty subsets of $\mathbb R^d$.
Then for all $\lambda_1, \ldots, \lambda_r \in \mathbb R_{\geq 0}$,
\[
\vol(\lambda_1 \cdot K_1 + \cdots + \lambda_r \cdot K_r) = \sum_{\mathbf j \in [r]^d} V(K_{j_1}, \ldots, K_{j_d}) \lambda_{j_1} \cdots \lambda_{j_d}.
\]
\end{theorem}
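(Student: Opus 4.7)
The identity is a direct unfolding of the three axioms defining the mixed volume. The plan is to apply the normalization axiom $V(K, \ldots, K) = \vol(K)$ with $K := \lambda_1 \cdot K_1 + \cdots + \lambda_r \cdot K_r$, and then repeatedly use multilinearity to expand each of the $d$ arguments into a sum over the $r$ summands.

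First I would note that the stated linearity property (written for two summands) extends by an easy induction to arbitrary non-negative combinations, giving
\[
V\!\left(\sum_{i=1}^{r} \alpha_i \cdot L_i,\; K_2, \ldots, K_d\right) = \sum_{i=1}^{r} \alpha_i\, V(L_i, K_2, \ldots, K_d),
\]
and via the symmetry axiom, the analogous identity when the expansion happens in any other slot. Writing $K = \sum_{i=1}^r \lambda_i \cdot K_i$, normalization gives $\vol(K) = V(K, K, \ldots, K)$. Expanding the first argument yields $\sum_{j_1 \in [r]} \lambda_{j_1}\, V(K_{j_1}, K, \ldots, K)$; expanding the second argument then gives $\sum_{j_1, j_2} \lambda_{j_1} \lambda_{j_2}\, V(K_{j_1}, K_{j_2}, K, \ldots, K)$; and after $d$ such expansions we arrive at the claimed formula $\sum_{\mathbf j \in [r]^d} V(K_{j_1}, \ldots, K_{j_d})\, \lambda_{j_1} \cdots \lambda_{j_d}$.

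There is no real obstacle in this derivation. The substance of the theory — and what is usually attributed to Minkowski — lies rather in the unstated prerequisite: proving that a function $V$ with the three asserted properties actually exists. Concretely one must show that $\vol(\lambda_1 \cdot K_1 + \cdots + \lambda_r \cdot K_r)$ really is a homogeneous polynomial of degree $d$ in the parameters $\lambda_i \geq 0$, and that its coefficients extend to a symmetric, multilinear, non-negative function on $\mathcal K_d^d$. This is typically established by first approximating the $K_i$ by polytopes, using an inductive slicing identity $\vol(P) = \tfrac{1}{d}\sum_F h(u_F)\vol_{d-1}(F)$ (sum over facets $F$ with outer unit normal $u_F$ and support function $h$) to verify polynomiality for polytopes, and then passing to the limit by continuity of volume and Minkowski addition in the Hausdorff metric. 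Since the excerpt takes existence and uniqueness of $V$ as given, only the short multilinear expansion above is required to complete the proof.
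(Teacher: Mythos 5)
Your derivation is correct given the paper's framing: once one grants a function $V$ on $\mathcal K_d^d$ that is symmetric, Minkowski-multilinear with non-negative coefficients, and normalized by $V(K,\ldots,K)=\vol(K)$, the displayed identity follows exactly by your $d$-fold expansion of $\vol\bigl(\sum_{i=1}^r \lambda_i\cdot K_i\bigr)$, and extending the two-term linearity to $r$-term combinations by induction (using symmetry to expand each slot in turn) is unproblematic. There is no proof in the paper to compare against: the theorem is quoted as classical background from convex geometry, with a pointer to Schneider's book, so your argument fills in what the authors leave to the literature. Your closing remark locates the genuine content accurately, and it is worth stressing the logical order: in the standard development one first proves that $\vol(\lambda_1\cdot K_1+\cdots+\lambda_r\cdot K_r)$ is a homogeneous degree-$d$ polynomial in the $\lambda_i$ (for polytopes via the facet decomposition, then for general bodies by continuity in the Hausdorff metric), and the mixed volumes are \emph{defined} as the symmetrized coefficients, so the three properties the paper lists are consequences rather than axioms. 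Your proof inverts this order, which is legitimate under the paper's stipulation that such a $V$ exists and is unique, but note that the uniqueness assertion itself is established by essentially the same expansion/polarization computation you carry out, so the two statements are really proved together rather than one from the other.
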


For $A, B \in \mathcal K_d$,
\[
\vol(A + \epsilon \cdot B) = V(A, \ldots, A) + d \epsilon V(B, A, \ldots, A) + O(\epsilon^2),
\]
so 
\[
d V(B, A, \ldots, A) = \lim_{\epsilon \to 0^+} \frac{\vol(A + \epsilon\cdot B) - \vol(A)} \epsilon
\] is a measure of how fast $A$ grows when we add a small copy of $B$ to it. When $B$ is a Euclidean ball, $dV(B, A, \ldots, A)$ is the usual `surface area' (codimension $1$ volume of the boundary) of $A$.
By varying $B$ we obtain different notions of surface area.
The following result is closely related to the Brunn--Minkowski theorem.

\begin{theorem}[Minkowski's first inequality]
Let $A, B \in \mathcal K_d$. Then
\[
V(B, A, \ldots, A) \geq \vol(B)^{1/d}\vol(A)^{(d-1)/d},
\]
with equality if and only if $A$ and $B$ are homothetic.
\end{theorem}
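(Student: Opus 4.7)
The plan is to reduce Minkowski's first inequality to the Brunn--Minkowski theorem by comparing derivatives at $t=0$ of the two sides of Brunn--Minkowski applied to $A$ and $t \cdot B$. First dispose of the degenerate case $\vol(A)\vol(B) = 0$: mixed volumes are nonnegative, so the inequality holds trivially. So assume $\vol(A), \vol(B) > 0$ and define $f : [0, \infty) \to \mathbb{R}$ by $f(t) = \vol(A + t \cdot B)$. Minkowski's polynomial expansion with $r = 2$, $K_1 = A$, $K_2 = B$, $\lambda_1 = 1$, $\lambda_2 = t$ gives
\[
f(t) = \vol(A) + d\, t\, V(B, A, \ldots, A) + O(t^2),
\]
so $f'(0^+) = d\, V(B, A, \ldots, A)$. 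Meanwhile, since $B$ is convex, $\vol(t \cdot B) = t^d \vol(B)$, and Brunn--Minkowski applied to $A$ and $t \cdot B$ gives
\[
f(t)^{1/d} \geq \vol(A)^{1/d} + t\, \vol(B)^{1/d}.
\]
Both sides agree at $t = 0$, so comparing right derivatives yields $\tfrac{1}{d}\vol(A)^{(1-d)/d} f'(0^+) \geq \vol(B)^{1/d}$, which rearranges to the claimed inequality.

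For the equality case, first observe that $t \mapsto f(t)^{1/d}$ is concave on $[0, \infty)$: for $\lambda \in [0,1]$ and $t_1, t_2 \geq 0$, the identity $(1-\lambda)(A + t_1 \cdot B) + \lambda(A + t_2 \cdot B) = A + ((1-\lambda)t_1 + \lambda t_2) \cdot B$ combined with Brunn--Minkowski gives the midpoint inequality for $f^{1/d}$. Now set
\[
h(t) = f(t)^{1/d} - \vol(A)^{1/d} - t\, \vol(B)^{1/d}.
\]
Then $h$ is concave on $[0,\infty)$, nonnegative (by Brunn--Minkowski), and satisfies $h(0) = 0$. If equality holds in Minkowski's first inequality then $h'(0^+) = 0$, so concavity gives $h(t) \leq h(0) + t\, h'(0^+) = 0$ for all $t \geq 0$; combined with $h \geq 0$ this forces $h \equiv 0$. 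Hence equality holds in Brunn--Minkowski for every pair $(A, t \cdot B)$ with $t > 0$, and the equality characterization of Brunn--Minkowski gives $A$ and $t \cdot B$ homothetic, so $A$ and $B$ are homothetic. Conversely, if $B = \alpha \cdot A + v$, then by translation invariance and multilinearity of $V$ we have $V(B, A, \ldots, A) = \alpha \vol(A)$, matching $\vol(B)^{1/d}\vol(A)^{(d-1)/d} = \alpha \vol(A)$.

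The forward inequality is essentially a one-line consequence of Brunn--Minkowski once $f$ is expanded; I expect the main subtlety to lie in the equality case, where one must invoke both the concavity of $f^{1/d}$ and the equality characterization of Brunn--Minkowski to propagate equality from $t = 0$ out to all $t \geq 0$.
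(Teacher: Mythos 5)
Your proof is correct, but note that the paper does not prove this statement at all: Minkowski's first inequality is quoted there as classical background (with a pointer to Schneider's book), accompanied only by the remark that it is ``closely related to the Brunn--Minkowski theorem.'' Your argument makes that relation precise in the standard textbook way: expanding $f(t)=\vol(A+t\cdot B)$ via the Minkowski polynomial to identify $f'(0^+)=d\,V(B,A,\ldots,A)$, comparing right derivatives at $t=0$ in the Brunn--Minkowski bound $f(t)^{1/d}\ge\vol(A)^{1/d}+t\vol(B)^{1/d}$, and then using concavity of $f^{1/d}$ to push equality at the level of derivatives out to genuine Brunn--Minkowski equality for some $t>0$, whence $A$ and $B$ are homothetic; the converse computation $V(\alpha\cdot A+v,A,\ldots,A)=\alpha\vol(A)$ is also right. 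The only caveat is the degenerate case: you dispose of $\vol(A)\vol(B)=0$ only for the inequality, and indeed the ``equality iff homothetic'' characterization (like the paper's own statement of Brunn--Minkowski equality) is really a statement about bodies of positive volume --- for lower-dimensional $A$ or $B$ one can have equality ($0=0$) without homothety, e.g.\ two parallel segments of different lengths in $\mathbb R^2$. Since the paper's citation of the theorem carries the same implicit nondegeneracy assumption, this is a presentational point rather than a gap, but it would be worth one sentence restricting the equality discussion to $\vol(A),\vol(B)>0$.
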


Minkowski's first inequality is an extremely powerful tool for proving isoperimetric theorems.
For example, the classical isoperimetric theorem follows immediately by taking $B$ to be a Euclidean ball.
For our edge isoperimetric problem we have the following heuristic argument.
\begin{align*}
\eb(S) & = \sum_{i=1}^k \eb_i(S) \approx \sum_{i=1}^k dV(\lineseg u i, \bar S, \ldots, \bar S)
\\     & = d V\left(\sum_{i=1}^k \lineseg u i, \bar S, \ldots, \bar S\right)
         = d V\left(Z, \bar S, \ldots, \bar S\right)
\\     & \geq d \vol(Z)^{1/d}\vol(\bar S)^{(d-1)/d}
         = d \vol(Z)^{1/d}|S|^{(d-1)/d},
\end{align*}
which should be close to optimal when $\bar S$ is close to a scaling and translation of $Z$.
This strongly suggests the correct statement of Theorem~\ref{t:main}.
However, making this argument precise appears not to be straightforward.
The essential problem is that the volume of the projection of a unit cube varies between $1$ and $\sqrt d$, and so the approximation of the discrete problem by the continuous problem is not automatically good in every case.
We shall instead take a slightly different approach.
The philosophy is largely the same, but we will do as much of our approximation as possible on the discrete side before applying Theorem~\ref{t:ruzsa} to handle the transfer from the discrete to the continuous problem implicitly.

\section{Proof of Theorem~\ref{t:main}}\label{s:proof}

Throughout this section we fix a dimension $d$ and a set $\mathcal U$ as in the statement of Theorem~\ref{t:main}.
All constants may depend on $d$ and $\mathcal U$ but are otherwise absolute. 
Recall that we write $S_i = S + \{0, u_i\}$, and $\eb_i(S) = |S_i| - |S|$ for the contribution to $\eb(S)$ in the $u_i$-direction.
For $\hist x \in [k]^t$, we write $\push S x = S_{x_1\cdots x_t} = S + \{0, u_{x_1}\} + \cdots +\{0, u_{x_t}\}$. 
Note that $\push S x$ depends only on the number of coordinates of $\hist x$ taking each value, not on their order.
We call a finite subset $S$ of $\mathbb Z^d$ \defn{$\epsilon$-close to optimal} if $\eb(S) \leq (1 + \epsilon)\ebopt(|S|)$.

\subsection{Lower bound}\label{s:lower}

We first collect together some simple properties of $\ebopt$.

\begin{lemma} \label{basics}
\begin{itemize}
\item[\rm(i)] $\ebopt(n)$ is an increasing function of $n$.
\item[\rm(ii)] There are constants $0 < c < C$ such that $cn^{1-1/d} \leq \ebopt(n) \leq Cn^{1-1/d}$.
\item[\rm(iii)] For all $m, n \in \mathbb N$, $\ebopt(n+m) - \ebopt(n) \leq Cm^{1-1/d}$.
\end{itemize}
\end{lemma}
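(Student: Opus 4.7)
The plan is to treat the three parts mostly independently, in the order (i), (ii) (both bounds), then (iii); only the proof of (iii) will use the upper bound from (ii), and the rest can be done separately.

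For (i), the key strengthening is that every finite $T \subset \mathbb{Z}^d$ contains a vertex $v^* \in T$ with $\eb(T \setminus \{v^*\}) \le \eb(T)$. To produce such a $v^*$, I would fix a linear functional $\phi : \mathbb{R}^d \to \mathbb{R}$ that is nonzero on every $u \in \mathcal{U}$ (a generic $\phi$ works since $\mathcal{U}$ is finite), and choose $v^* \in T$ to maximise $\phi$ over $T$. For each $u \in \mathcal{U}$ with $\phi(u) > 0$ the out-neighbour $v^* + u$ has strictly larger $\phi$-value than $v^*$, hence lies outside $T$; symmetrically, for each $u \in \mathcal{U}$ with $\phi(u) < 0$ the in-neighbour $v^* - u$ lies outside $T$. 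Thus at least $k$ of the $2k$ oriented neighbours of $v^*$ are outside $T$. A short bookkeeping argument (tracking which edges are created, destroyed, or preserved when a vertex is removed) shows that deleting any $v \in T$ changes $\eb(T)$ by $(\text{oriented neighbours of } v \text{ inside } T) - k$, so our choice of $v^*$ gives $\eb(T \setminus \{v^*\}) \le \eb(T)$. Applying this to an optimal set of size $n+1$ yields $\ebopt(n) \le \ebopt(n+1)$.

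For the upper bound in (ii), a direct construction suffices: set $M = \lfloor n^{1/d} \rfloor$ and let $S$ be the cube $\{0, \ldots, M-1\}^d$ together with $n - M^d \le dM^{d-1}$ further vertices placed far enough away that they contribute no cross-edges to the cube. For each $u_i \in \mathcal{U}$ the set $(S + u_i) \setminus S$ lies in an $O(M^{d-1})$-sized slab along the boundary of the cube, so $\eb(S) = O(M^{d-1}) = O(n^{(d-1)/d})$. For the lower bound, pick $d$ linearly independent vectors $u_{i_1}, \ldots, u_{i_d} \in \mathcal{U}$ (they exist since $\mathcal{U}$ spans $\mathbb{R}^d$), and restrict attention to the sub-Cayley graph $G'$ on these generators only. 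The sublattice $L = \sum_j \mathbb{Z} u_{i_j}$ has finite index in $\mathbb{Z}^d$, so $G'$ decomposes, indexed by cosets of $L$, as a disjoint union of copies of the directed lattice $(\mathbb{Z}^d, \{e_1, \ldots, e_d\})$ via the $\mathbb{Z}$-linear isomorphism $u_{i_j} \mapsto e_j$. On each coset $c$, applying the Loomis--Whitney inequality (each column in direction $u_{i_j}$ contributes at least one outgoing edge) together with AM--GM gives $\eb_{G'}(S \cap c) \ge d |S \cap c|^{(d-1)/d}$, and concavity of $x \mapsto x^{(d-1)/d}$ combines these into $\eb_{G_{\mathcal{U}}}(S) \ge \eb_{G'}(S) \ge d |S|^{(d-1)/d}$.

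Part (iii) is then routine given the upper bound of (ii): take an optimal set $S$ of size $n$ and the construction $T$ of size $m$ with $\eb(T) \le Cm^{1-1/d}$, and translate $T$ by a vector large enough that no edge of $G_{\mathcal{U}}$ joins $S$ to the translate (possible since both sets and $\mathcal{U}$ are finite). Then $\eb(S \cup T) = \eb(S) + \eb(T) \le \ebopt(n) + Cm^{1-1/d}$. The only step requiring a genuine idea is the extremal-vertex trick for (i); the remaining pieces are a cube construction and a standard appeal to Loomis--Whitney, so (i) is where I would expect most of the care to be needed.
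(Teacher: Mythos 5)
Your proof is correct, and parts (i) and (iii) follow essentially the paper's route: your extremal-vertex argument for (i) is the same as the paper's (there the generic functional is $v \mapsto v\cdot w$ for a $w$ with algebraically independent entries, and the same bookkeeping shows that at most $k$ of the $2k$ oriented neighbour slots of the maximiser lie inside the set, so deletion does not increase $\eb$), and (iii) is the same disjoint-union trick, the paper taking an optimal $m$-set and then invoking (ii) where you plug in your explicit construction. The genuine difference is part (ii): the paper gets both bounds in one line from Ruzsa's theorem (Theorem~\ref{t:ruzsa}) combined with the inequalities $\vb(S) \leq \eb(S) \leq k\,\vb(S)$, since the maximum in-degree of $G_{\mathcal U}$ is $k$, whereas you give a self-contained argument: a cube of side $M=\lfloor n^{1/d}\rfloor$ plus far-away leftover points for the upper bound, and, for the lower bound, discarding all but $d$ linearly independent generators, splitting over cosets of the sublattice they generate, and applying Loomis--Whitney with AM--GM on each coset together with superadditivity of $x \mapsto x^{(d-1)/d}$. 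Both work; the paper's version is shorter given that Ruzsa's theorem must be imported anyway for the main theorem, while yours avoids that black box, produces explicit constants, and gives the lower bound for every $n$ rather than only for $n \geq n_0$ with small $n$ absorbed into the constants. One trivial slip: $n - M^d \leq dM^{d-1}$ is false for $d \geq 3$ (the correct bound is $n - M^d < (M+1)^d - M^d = O(M^{d-1})$), but since only $O(M^{d-1})$ is needed, and each stray vertex contributes at most $k$ boundary edges, your $O(n^{(d-1)/d})$ conclusion is unaffected.
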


\begin{proof}
(i) Choose $S \subset \mathbb Z^d$ with $|S| = n+1$ and $\eb(S) = \ebopt(n+1)$.
Let $w$ be a vector in $\mathbb R^d$ with algebraically independent entries;
then $ v\mapsto v\cdot w$ is an injection from $\mathbb Z^d$ to $\mathbb R$.
Let $v$ be the unique element of $S$ with $v \cdot w$ maximal. 
Then
\begin{itemize}
\item for each $i \in [k]$ such that $u_i\cdot w > 0$, $v+u_i \not\in S$.
\item for each $i \in [k]$ such that $v - u_i \in S$, $u_i \cdot w > 0$.
\end{itemize}
Hence
\begin{align*}
\ebopt_{\mathcal U}(n) & \leq \eb_{\mathcal U}(S\setminus\{v\}) 
\\ & = \eb_{\mathcal U}(S) - |\{i \in [k] \, : \, v + u_i \notin S\}| + |\{i \in [k] \, : \, v - u_i \in S)\}|
\\ & \leq \eb_{\mathcal U}(S) - |\{i \in [k] \, : \, u_i \cdot w > 0\}| + |\{i \in [k] \, : \, u_i \cdot w > 0)\}|
\\ &= \eb_{\mathcal U}(S) = \ebopt_{\mathcal U}(n+1).
\end{align*}

(ii) The maximum in-degree of $G_{\mathcal U}$ is $k$, so this follows from Theorem~\ref{t:ruzsa} and the inequalities \eqref{differ-by-a-constant}.

(iii) Let $S$ be a set of $n$ points with $\eb(S) = \ebopt(n)$ and let $T$ be a set of $m$ points with $\eb(T) = \ebopt(m)$.  
By translating if necessary we may assume that the distance between $S$ and $T$ is greater than the length of the longest vector in $\mathcal U$.
Then 
\[
\ebopt(m+n) \leq  \eb(S \cup T) = \ebopt(n) + \ebopt(m) \leq \ebopt(n) + C m^{1-1/d},
\]
by (ii).
\end{proof}

Lemma~\ref{basics}(iii) tells us that $\ebopt(n)$ varies only slowly with $n$.
It follows that if we make a small modification to an optimal set then it should remain close to optimal.
We now prove that the `pushing out' operation $S \mapsto S_i$ causes suitably small modifications when applied to sets that are already close to optimal.

\begin{lemma} \label{pushing-preserves-optimality}
For every $\epsilon > 0$, there is an $n_0 = n_0(\epsilon)$ such that, if $S$ is $\epsilon$-close to optimal and $|S| \geq n_0$, then $S_i$ is $3\epsilon$-close to optimal for each $i \in [k]$.
\end{lemma}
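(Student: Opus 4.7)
The plan is to exploit the \emph{submodularity} of the edge boundary function: for any subsets $A,B$ of the vertex set of any directed graph,
\[
\eb(A \cup B) + \eb(A \cap B) \leq \eb(A) + \eb(B).
\]
This can be verified directly by checking that each edge $(u,v)$ contributes at most as much to the left-hand side as to the right (the only edges that contribute strictly less on the left are those going from $A \setminus B$ to $B \setminus A$ or vice versa). Applying this with $A = S$ and $B = S + u_i$, and using translation-invariance to get $\eb(B) = \eb(S)$, yields the key inequality
\[
\eb(S_i) \leq 2\eb(S) - \eb(S \cap (S + u_i)).
\]

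Next, I would use the fact that $|S \cap (S + u_i)| = |S| - \eb_i(S)$, which gives the lower bound $\eb(S \cap (S + u_i)) \geq \ebopt(|S| - \eb_i(S))$. By Lemma~\ref{basics}(iii) applied with $n = |S| - \eb_i(S)$ and $m = \eb_i(S)$,
\[
\ebopt(|S|) - \ebopt(|S| - \eb_i(S)) \leq C\, \eb_i(S)^{1 - 1/d}.
\]
Combining this with the hypothesis $\eb(S) \leq (1+\epsilon)\ebopt(|S|)$ produces
\[
\eb(S_i) \leq (1 + 2\epsilon)\ebopt(|S|) + C\,\eb_i(S)^{1 - 1/d}.
\]

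To finish, I need the error term $C\eb_i(S)^{1-1/d}$ to be absorbed into $\epsilon\,\ebopt(|S|)$. Since $\eb_i(S) \leq \eb(S) \leq (1+\epsilon)\ebopt(|S|) = O(|S|^{1-1/d})$ by Lemma~\ref{basics}(ii), the error term is of order $O(|S|^{(1-1/d)^2})$, strictly smaller than $\ebopt(|S|) = \Theta(|S|^{1-1/d})$. So for all $|S| \geq n_0(\epsilon)$ with $n_0(\epsilon)$ chosen appropriately, $C\eb_i(S)^{1-1/d} \leq \epsilon\,\ebopt(|S|)$; combined with the monotonicity of $\ebopt$ (Lemma~\ref{basics}(i)), this gives $\eb(S_i) \leq (1+3\epsilon)\ebopt(|S|) \leq (1+3\epsilon)\ebopt(|S_i|)$, as required.

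The main conceptual obstacle is identifying submodularity as the correct tool. A direct approach trying to bound $\eb_j(S_i)$ in terms of $\eb_j(S)$ for each direction $j$ only seems to yield $\eb(S_i) \leq 2\eb(S)$, which is far too lossy; the submodular bound instead captures the essential fact that $S \cap (S + u_i)$ accounts for almost all of $S$, so its own (near-optimal) boundary subtracts off most of the apparent $2\eb(S)$ overcount.
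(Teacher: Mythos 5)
Your proof is correct, and in substance it coincides with the paper's: your submodularity inequality applied to $A=S$, $B=S+u_i$ is exactly the paper's key claim in disguise, because $S\cap(S+u_i)$ is the translate $(S\setminus F_i)+u_i$ of the set $S\setminus F_i$ obtained by deleting the frontier $F_i=\{v\in S: v+u_i\notin S\}$, so $\eb\bigl(S\cap(S+u_i)\bigr)=\eb(S\setminus F_i)$ and your bound $\eb(S_i)\le 2\eb(S)-\eb(S\cap(S+u_i))$ is the same as the paper's $\eb(S_i)+\eb(S\setminus F_i)\le 2\eb(S)$, which the paper proves direction by direction with an explicit bookkeeping of which edges appear and disappear. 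The difference is only in how this inequality is obtained: you invoke the standard submodularity of the directed cut function (a clean, general fact that is easy to verify edge by edge), whereas the paper derives it by hand via the frontier decomposition, which keeps the argument self-contained but is more computational. The remainder of your argument --- lower-bounding the boundary of the intersection by $\ebopt(|S|-\eb_i(S))$, using Lemma~\ref{basics}(iii) to control $\ebopt(|S|)-\ebopt(|S|-\eb_i(S))$, absorbing the $O\bigl(|S|^{(1-1/d)^2}\bigr)$ error for $|S|\ge n_0(\epsilon)$, and finishing with monotonicity of $\ebopt$ --- is exactly the paper's conclusion. (Implicitly you also use, as the paper does, that $S\cap(S+u_i)$ is nonempty for $|S|$ large, which follows since $\eb_i(S)\le(1+\epsilon)C|S|^{1-1/d}<|S|$.) So your route buys a slicker derivation of the one structural inequality, at the cost of appealing to a general fact rather than a direct check.
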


\begin{proof}
For $i \in [k]$, let $F_i = \{v \in S \, : \, v + u_i \notin S\}$ be the \defn{frontier} of $S$ in the $u_i$-direction.
We claim that, for each $j \in [k]$,
\begin{align}
\eb_j(S_i) - \eb_j(S) \leq \eb_j(S) - \eb_j(S \setminus F_i). \label{claim1}
\end{align}
That is, if the boundary in the $u_j$-direction gets much larger when we pass from $S$ to $S_i$, then it also gets much smaller if we pass from $S$ to $S \setminus F_i$.
This seems unlikely if $S$ is close to optimal.

If $T, T'$ are disjoint finite subsets of $\mathbb Z^d$, then
\begin{align}\label{e:split}
\begin{split}
\eb_j(T \cup T') - \eb_j(T) & = |\{v \in T' \, : \, v + u_j \notin T \cup T'\}| - |\{v \in T' \, : \, v - u_j \in T\}|
\\ & = |T' \setminus \big( (T \cup T') - u_j \big)| - |T' \cap (T + u_j)|,
\end{split}
\end{align}
as every edge that changes from contributing to the edge boundary to not contributing or vice versa has exactly one endpoint in the set of new vertices $T'$.

Taking $T = S$ and $T' = S_i \setminus S = F_i + u_i$ gives
\begin{align*}
\eb_j(S_i) - \eb_j(S)     & = |(F_i+u_i) \setminus (S_i - u_j)| - |(F_i+u_i) \cap (S + u_j)| 
\\ & = |(F_i + u_j) \setminus (S_i - u_i)| - |(F_i - u_j) \cap (S - u_i)|,
\end{align*}
and taking $T = S \setminus F_i$ and $T' = F_i$ gives
\begin{align*}
\eb_j(S) - \eb_j(S \setminus F_i) & = |F_i \setminus (S - u_j)| - |F_i \cap ((S \setminus F_i) + u_j)|
\\ & = |(F_i + u_j) \setminus S| - |(F_i - u_j) \cap (S \setminus F_i)|.
\end{align*}
Observing that $S \subseteq S_i - u_i$ and $S \setminus F_i \subseteq S- u_i$ proves \eqref{claim1}.

Now  
\[
|F_i| = \eb_i(S) \leq \eb(S) \leq (1+\epsilon) \ebopt(|S|) \leq (1+\epsilon)C |S|^{1-1/d},
\] 
hence by summing \eqref{claim1} over $j \in [k]$ and applying Lemma \ref{basics},
\begin{align*}
\eb(S_i) & \leq 2 \eb(S) - \eb(S \setminus F_i)
\\ & \leq (2 + 2 \epsilon)\ebopt(|S|) - \ebopt(|S \setminus F_i|)
\\ & =    (1 + 2 \epsilon)\ebopt(|S|) + \ebopt(|S|)- \ebopt(|S \setminus F_i|)
\\ & \leq (1 + 2 \epsilon)\ebopt(|S|) + C |F_i|^{1-1/d}
\\ & \leq (1 + 2 \epsilon)\ebopt(|S|) + C\big((1+\epsilon)C |S|^{(1-1/d)}\big)^{1-1/d}
\\ & \leq (1 + 3 \epsilon)\ebopt(|S|) 
 \leq (1 + 3 \epsilon)\ebopt(|S_i|),
\end{align*}
provided $|S|$ is sufficiently large, depending on $\epsilon$.
\end{proof}

Repeated application of Lemma~\ref{pushing-preserves-optimality} tells us that, if $S$ is $\epsilon$-close to optimal, and $\hist x \in [k]^t$ with $t$ small, then $\eb(\push S x)$ is not much bigger than $\ebopt(|\push S x|)$. 
Next we show that, in fact, $\eb(\push S x)$ is not much bigger than $\ebopt(|S|)$.

\begin{lemma} \label{Sx}
There exist constants $(C_t)_{t=0}^\infty$ such that the following holds. For every $t$ and every $0 < \epsilon < 1$, there is an $n_0 = n_0(\epsilon, t)$ such that, if $S$ is $\epsilon$-close to optimal, $|S| \geq n_0$ and $\hist x \in [k]^t$, then
\begin{itemize}
\item[\rm(i)]  $|\push S x| - |S| \leq C_t |S|^{1-1/d}$.
\item[\rm(ii)] $\eb(\push S x) \leq (1+2\cdot 3^t \epsilon) \ebopt (|S|)$.
\end{itemize}
\end{lemma}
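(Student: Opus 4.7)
The plan is induction on $t$. I would strengthen (ii) to the cleaner statement (a): $\push S x$ is $3^t\epsilon$-close to optimal, i.e.\ $\eb(\push S x) \leq (1 + 3^t\epsilon)\ebopt(|\push S x|)$, and prove (a) together with (i) inductively. Statement (ii), as formulated with $\ebopt(|S|)$ on the right, will then follow from (a) and (i) by a final translation from $\ebopt(|\push S x|)$ back to $\ebopt(|S|)$ using Lemma~\ref{basics}(iii).

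The base case $t = 0$ is immediate. For the inductive step, write $\hist x = \hist{x'} i$ with $\hist{x'}\in[k]^t$, so $\push S x = (\push S {x'})_i$. By the inductive form of (a), $\push S {x'}$ is $3^t\epsilon$-close to optimal, so Lemma~\ref{pushing-preserves-optimality} (applied with $3^t\epsilon$ in place of $\epsilon$, requiring $|S|$ large enough so that $|\push S {x'}|$ exceeds the relevant threshold) yields (a) at step $t+1$. For (i), note $|\push S x| - |\push S {x'}| = \eb_i(\push S {x'}) \leq \eb(\push S {x'})$; by (a) and Lemma~\ref{basics}(ii) this is at most $(1+3^t\epsilon)C|\push S {x'}|^{1-1/d}$, which together with the inductive bound $|\push S {x'}| \leq |S| + C_t|S|^{1-1/d} = O(|S|)$ for $|S|$ large gives $O(|S|^{1-1/d})$; adding to the inductive bound on $|\push S {x'}| - |S|$ closes (i) with a suitable $C_{t+1}$ depending on $t$, $d$, and $\mathcal U$.

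To extract (ii), combine (a) with Lemma~\ref{basics}(iii) and the freshly proved (i):
\[
\ebopt(|\push S x|) - \ebopt(|S|) \leq C(|\push S x| - |S|)^{1-1/d} \leq C(C_t |S|^{1-1/d})^{1-1/d} = O(|S|^{(1-1/d)^2}).
\]
Since $\ebopt(|S|) \geq c|S|^{1-1/d}$ by Lemma~\ref{basics}(ii) and $(1-1/d)^2 < 1 - 1/d$, this additive error is $o(\ebopt(|S|))$. Hence for $|S| \geq n_0(\epsilon, t)$ large enough we can force $\ebopt(|\push S x|) \leq (1 + \eta)\ebopt(|S|)$ with $\eta := 3^t\epsilon/(1+3^t\epsilon)$, which gives
\[
\eb(\push S x) \leq (1 + 3^t\epsilon)(1+\eta)\ebopt(|S|) = (1 + 2 \cdot 3^t\epsilon)\ebopt(|S|),
\]
the equality being the algebraic identity that the factor $2$ in the coefficient $2 \cdot 3^t$ is engineered to exploit.

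The main obstacle is confirming that such an $n_0(\epsilon, t)$ really exists, i.e.\ that the gap $\ebopt(|\push S x|) - \ebopt(|S|)$ is genuinely small on the scale of $\ebopt(|S|) = \Theta(|S|^{1-1/d})$ even after $t$ pushes. This rests on the exponent gap $(1-1/d)^2 < 1-1/d$, which ensures that each push expands the set only by a lower-order amount $O(|S|^{1-1/d})$, so that $|\push S x| = (1+o(1))|S|$ and the boundaries are comparable; without this, the accumulated error from iterating Lemma~\ref{pushing-preserves-optimality} would exceed the slack $3^t\epsilon \cdot \ebopt(|S|)$ that the factor $2$ provides.
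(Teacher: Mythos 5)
Your proposal is correct and takes essentially the same route as the paper: the paper also (implicitly by iteration) uses Lemma~\ref{pushing-preserves-optimality} to get that $\push S x$ is $3^t\epsilon$-close to optimal (your statement (a)), proves (i) by induction on $t$ via Lemma~\ref{basics}, and then deduces (ii) from this closeness together with Lemma~\ref{basics}(ii),(iii) and the exponent gap $(1-1/d)^2 < 1-1/d$, absorbing the additive error for $|S|$ large. Your explicit $\eta$-bookkeeping and the direct use of $\ebopt(m) \leq Cm^{1-1/d}$ in (i) (rather than going through Lemma~\ref{basics}(iii)) are only cosmetic differences.
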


\begin{proof}
(i) We proceed by induction on $t$. We can take $C_0 = 0$.
So assume we have found suitable $C_0, \ldots, C_t$, let $\hist x \in [k]^t$ and let $i \in [k]$. 
Note that, by Lemma \ref{pushing-preserves-optimality}, since $S$ is $\epsilon$-close to optimal, $\push S x$ is $3^{t}\epsilon$-close to optimal. 
Hence, by Lemma \ref{basics},
\begin{align*}
|S_{\hist x i}| - |\push S x| & \leq \eb(\push S x) \leq (1+3^t\epsilon) \ebopt(|\push S x|)
\\ & = (1 + 3^t \epsilon) \big(\ebopt(|S|) + \ebopt(|\push S x|) - \ebopt(|S|)\big)
\\ & \leq (1 + 3^t \epsilon) \big(C|S|^{1-1/d} + C(|\push S x|-|S|)^{1-1/d}\big).
\end{align*}
Also, by induction, $|\push S x| - |S| \leq C_t |S|^{1-1/d}$, and so
\begin{align*}
|S_{\hist x i}| - |S| & = |S_{\hist x i}| - |\push S x| + |\push S x| - |S| 
\\ & \leq (1 + 3^t \epsilon) \big(C|S|^{1-1/d} + C(|\push S x|-|S|)^{1-1/d}\big) + |\push S x| - |S|,
\\ & \leq (1 + 3^t \epsilon) \big(C|S|^{1-1/d} + C(C_t|S|^{1-1/d})^{1-1/d}\big) + C_t |S|^{1-1/d} \\ & \leq C_{t+1}|S|^{1-1/d},
\end{align*}
for some constant $C_{t+1}$.

(ii) By Lemma \ref{basics}, and part (i)
\begin{align*}
\ebopt(|\push S x|) - \ebopt(|S|) & \leq C(|\push S x| - |S|)^{1-1/d} \leq C(C_t|S|^{1-1/d})^{1-1/d},
\end{align*}
and so, since $\push S x$ is $3^{t}\epsilon$-close to optimal, it follows that
\begin{align*}
\eb(\push S x) & \leq (1+3^{t}\epsilon) \ebopt(|\push S x|)
= (1+3^{t}\epsilon) \big( \ebopt(|S|) + \ebopt(|\push S x|) - \ebopt(|S|) \big)
\\ & \leq (1+3^{t}\epsilon) \big( \ebopt(|S|) + C(C_t|S|^{1-1/d})^{1-1/d} \big)
\\ & \leq (1+2\cdot 3^{t} \epsilon) \ebopt (|S|),
\end{align*}
with the final inequality following from Lemma~\ref{basics}(ii), provided $|S|$ is sufficiently large, depending on $\epsilon$ and $t$.
\end{proof}

To complete the proof of Theorem~\ref{t:main}, we would like to argue as follows.
Let $S$ be close to optimal.
Then,
\begin{align*}
\ebopt(|S|) & \approx \eb(S) = \sum_{i=1}^k (|S_i| - |S|) 
\\ & \approx (|S_1| - |S|) + (|S_{12}| - |S_1|) + \cdots + (|S_{1\ldots k}| - |S_{1 \ldots (k-1)}|) \numberthis \label{telescope}
\\ & = |S_{1\ldots k}| - |S| = |S + Z_0| - |S|.
\end{align*}
The final expression is a vertex boundary, so can be bounded below using Theorem~\ref{t:ruzsa}.
However, there is a problem with the second approximation step.
We know that, say, $\eb(S_1) \approx \eb(S)$.
But for the approximation to hold term by term we would need the stronger result that $\eb_2(S_1) \approx \eb_2(S)$.
To fix this we note that these approximations cannot all be overestimates, and it is at least plausible that if we consider much longer telescoping sums obtained by extending in a random direction at each stage then we will obtain a more accurate approximation to a related quantity.
This is the idea behind the proof below.

\begin{proof}[Proof of Theorem~\ref{t:main}, lower bound]
First fix $\eta > 0$ small depending on $\delta$, $t \in \mathbb N$ large depending on $\eta$, $\epsilon > 0$ small depending on $t$, $n_0$ sufficiently large depending on $\epsilon$, then let $S$ be $\epsilon$-close to optimal with $|S| \geq n_0$.
By applying Lemma~\ref{Sx}(ii) to each $\push S x$,
\begin{align*}
t (1 + 2 \cdot 3^{t-1} \epsilon) \ebopt(|S|) & \geq \sum_{s=0}^{t-1} \E_{\hist x \in [k]^s} \eb(\push S x)
 = \sum_{s=0}^{t-1} \E_{\hist x \in [k]^s} {\textstyle \sum_{i=1}^k} (|S_{\hist x i}| - |\push S x|)
\\ & = k \sum_{s=0}^{t-1} \Big( \E_{\hist x \in [k]^{s+1}} |\push S x| - \E_{\hist x \in [k]^s} |\push S x| \Big)
\\ & = k \E_{\hist x \in [k]^t} (|\push S x| - |S|),
\end{align*}
where each expectation is over a uniform choice of $\hist x$.

We now observe that a random $\hist x \in [k]^t$ is very likely to have about $t/k$ entries of each value.
Since $\push S x$ only depends on the number of coordinates of $\hist x$ with each value, not on their order, all of the terms in the final expectation are then very close to one single value.

To make this precise, observe that, with probability at least $1 - k e^{-2\eta^2t/k^2}$, for each $i \in [k]$, at least $(1-\eta)t/k$ entries of $\hist x$ take the value $i$.
(The number of coordinates taking value $i$ has binomial distribution $B(t,1/k)$, so this is a simple application of Chernoff's inequality (see \cite[Remark~2.5]{JLR}) and the union bound.)

For all such $\hist x$, $\push S x \supseteq \push S y$, where $\hist y$ is a vector of length $k\lceil (1-\eta)t/k \rceil$ with coordinates taking each value in $[k]$ equally often.
Now
\[
\push S y = S + \lceil (1-\eta)t/k \rceil (\{0, u_1\} + \cdots + \{0, u_k\}) = S + \lceil (1-\eta)t/k \rceil Z_0,
\]
and so by the Chernoff bound
\[
\E_{\hist x \in [k]^t} (|\push S x| - |S|) \geq (1 - k e^{-2\eta^2t/k^2})(|S + \lceil(1-\eta)t/k\rceil Z_0| - |S|). 
\]
Note that the final term is the vertex boundary of the set $S$ in the graph $G_{\mathcal{U}'}$ with $\mathcal{U}' = \lceil(1-\eta)t/k\rceil Z_0 \setminus\{0\}$. 

Since $\mathcal{U}' \supseteq \mathcal{U}$ it generates $\mathbb{Z}^d$ as a group, so we may apply Theorem~\ref{t:ruzsa} to $\mathcal{U}'$ with $\delta = \eta$ to obtain
\begin{align*}
\E_{\hist x \in [k]^t} (|\push S x| - |S|) & \geq (1 - k e^{-2\eta^2t/k^2})(1-\eta)d\vol(\lceil(1-\eta)t/k\rceil Z)^{1/d}|S|^{1-1/d}
\\ & \geq (1 - k e^{-2\eta^2t/k^2})(1-\eta)^2 \frac t k d\vol(Z)^{1/d}|S|^{1-1/d}.
\end{align*}
Hence
\begin{align*}
\ebopt(|S|) & \geq \frac {(1 - k e^{-2\eta^2t/k^2})(1-\eta)^2} {(1 + 2 \cdot 3^{t-1} \epsilon)} d \vol(Z)^{1/d}|S|^{1-1/d}
\\ & \geq (1-\delta)d \vol(Z)^{1/d}|S|^{1-1/d}. \qedhere
\end{align*}

\end{proof}

\subsection{Upper bound} \label{s:upper}

In this section we show that the lower bound on $\ebopt(n)$ from Section~\ref{s:lower} is sharp by exhibiting a family of sets for which we can prove a matching upper bound on the edge boundary. We again seek to use the approximation~\eqref{telescope}.
Since we only want an upper bound, it would be enough to show that $\eb_i(S_{1\ldots j}) \geq \eb_i(S)$ for all $i, j \in [k]$.
This is not true for a general $S$, but it is true for some particular choices.
Recall that the zonotope $Z$ is defined as the convex hull of $Z_0= \{0, u_1\} + \{0, u_2\} +  \cdots + \{0, u_k\}$ in $\mathbb R^d$. 
We would like to take $S$ to be any scaled copy of $Z$, intersected with $\mathbb Z^d$.
For technical reasons it will be convenient to restrict to integer scale factors.
Write $Z(t) = (t\cdot Z) \cap \mathbb Z^d$ for $t \in \mathbb N$.

\begin{proposition}
For each $t \in \mathbb N$,
\begin{itemize}
\item[\rm{(i)}]  $Z(t+1) = (Z \cap \mathbb Z^d) + tZ_0$.  In particular, $Z(t) + Z_0 = Z(t+1)$.
\item[\rm(ii)] if $Z(t) \subseteq S \subset \mathbb Z^d$ and $S$ is finite, then $\eb(S) \geq \eb(Z(t))$.
\end{itemize}
\end{proposition}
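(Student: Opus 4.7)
For part (i), I would prove both inclusions in $Z(t+1) = (Z \cap \mathbb Z^d) + tZ_0$ separately. The inclusion $(Z \cap \mathbb Z^d) + tZ_0 \subseteq Z(t+1)$ is bookkeeping: both summands are integral, and $tZ_0 \subseteq t \cdot Z$ (since $Z_0 \subseteq Z$), so the Minkowski sum lies in $Z + t \cdot Z = (t+1) \cdot Z$ by convexity of $Z$. For the reverse inclusion, I would take $x \in Z(t+1)$ with any representation $x = \sum_i \lambda_i u_i$, $\lambda_i \in [0, t+1]$, and split each coefficient as $\lambda_i = a_i + \mu_i$ with $a_i = \min(\lfloor \lambda_i \rfloor, t) \in \{0, \ldots, t\}$ and $\mu_i \in [0, 1]$. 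Noting that $tZ_0 = \{\sum_i c_i u_i : c_i \in \{0, 1, \ldots, t\}\}$ (which one checks directly from the definition of the iterated sumset, each factor of $Z_0$ contributing either $0$ or $u_i$ to each coordinate), the point $z := \sum_i a_i u_i$ lies in $tZ_0$ and $y := \sum_i \mu_i u_i$ lies in $Z$; writing $y = x - z$ shows $y \in \mathbb Z^d$ as well. The ``in particular'' statement then follows by a one-step induction.

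For part (ii), I would work direction by direction. For fixed $i \in [k]$, the cosets of $\mathbb Z u_i$ in $\mathbb Z^d$ --- the discrete lines $L$ in the $u_i$-direction --- partition $\mathbb Z^d$, and
\[
\eb_i(T) = |\{v \in T : v + u_i \notin T\}| = \sum_L (\text{number of maximal runs of } T \cap L).
\]
Because $t \cdot Z$ is convex, $(t \cdot Z) \cap L$ is a line segment, so $Z(t) \cap L$ consists of consecutive points of $L$ (or is empty); in particular, $Z(t)$ contributes at most one run per line and $\eb_i(Z(t))$ is exactly the number of lines $L$ that meet $Z(t)$. For any $S \supseteq Z(t)$, each such line also meets $S$ and hence contributes at least one run to $\eb_i(S)$. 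Therefore $\eb_i(S) \geq \eb_i(Z(t))$ for every $i$, and summing over $i$ gives $\eb(S) \geq \eb(Z(t))$.

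The main obstacle I anticipate is the subtlety in (i) arising from possible linear dependencies among the $u_i$ when $k > d$: the floor trick produces non-integer coefficients $\mu_i$, so it is not obvious from the expression $y = \sum_i \mu_i u_i$ alone that $y$ is a lattice point. The fix is to deduce integrality of $y$ from $y = x - z$ rather than from its representation. Part (ii) is conceptually easy once one reformulates $\eb_i$ as a count of runs per line; the ``one run per line'' property for $Z(t)$ then falls out directly from the convexity of $t \cdot Z$.
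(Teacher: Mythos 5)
Your proof is correct and follows essentially the same route as the paper: for (i) the same split of each coefficient into an integer part in $\{0,\dots,t\}$ and a fractional part in $[0,1]$, with integrality of the residual point deduced from the difference $x-z$; for (ii) the same per-line counting of the directional boundary, with convexity of $t\cdot Z$ giving exactly one contribution per line for $Z(t)$ and at least one per such line for any $S\supseteq Z(t)$.
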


\begin{proof}
(i) Certainly $(Z \cap \mathbb Z^d) + t Z_0 \subseteq Z(t+1)$.
For the reverse inclusion, let $v \in Z(t+1)$.
Then there are coefficients $\alpha_i$, $0 \leq \alpha_i \leq t+1$ such that $v = \sum_{i=1}^k \alpha_i u_i$.
For each $i$, write $\alpha_i = a_i + \beta_i$ where $0 \leq a_i \leq t$ is an integer and $0 \leq \beta_i \leq 1$. Then
\[
v = \sum_{i=1}^k a_i u_i + \sum_{i=1}^k \beta_i u_i \in tZ_0 + Z.
\]
Since $v \in \mathbb Z^d$ and $tZ_0 \subseteq \mathbb Z^d$, we in fact have $v \in tZ_0 + (Z \cap \mathbb Z^d)$. 

(ii) 
For each $i \in [k]$ and $v \in \mathbb Z^d$, write $L_{v,i} = \{v + \lambda u_i \, : \, \lambda \in \mathbb Z\}$ and $\mathcal L_i = \{L_{v,i} \,:\, v \in \mathbb Z^d\}$. We think of $\mathcal L_i$ as the set of `lines in direction $u_i$' (but note that, if the coordinates of $u_i$ have greatest common factor $h$, then $h$ distinct elements of $\mathcal L_i$ are contained in the same line in $\mathbb R^d$).
For any finite $S \subset \mathbb Z^d$, write $\mathcal L_i(S) = \{ L \in \mathcal L_i \,:\, L \cap S \neq \emptyset\}$. Every element of $\mathcal L_i(S)$ contributes at least $1$ to $\eb_i(S)$, so $\eb_i(S) \geq |\mathcal L_i(S)|$.
Moreover, equality holds when $S = Z(t)$ as every $L \in \mathcal L_i(Z(t))$ meets $Z(t)$ in an interval (that is, a set of the form $\{v + \lambda u_i \,:\, \lambda \in \mathbb Z, a \leq \lambda \leq b \}$ for some $v \in \mathbb Z^d$ and $a, b \in \mathbb Z$).
Thus whenever $Z(t) \subseteq S$, we have for each $i \in [k]$ that
\[
\eb_i(S) \geq |\mathcal L_i(S)| \geq |\mathcal L_i(Z(t))| = \eb_i(Z(t)). \qedhere
\]
\end{proof}

So we can use \eqref{telescope} to relate the edge boundary of $Z(t)$ to $|Z(t) + Z_0| - |Z(t)| = |Z(t+1)|-|Z(t)|$. We can understand this quantity using the following classical result (see for example \cite{BR07}).

\begin{theorem}[Ehrhart polynomials]\label{t:Ehrhart}
Let $P$ be a polytope with vertices in $\mathbb Z^d$.
Then for $t \in \mathbb N$,
\[
|t \cdot P \cap \mathbb Z^d| = \sum_{i=0}^d a_it^i,
\]
for some coefficients $a_i$ with $a_d = \vol(P)$.
\end{theorem}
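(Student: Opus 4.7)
The plan is to prove the theorem in two steps: first establish it for lattice simplices via a cone construction and generating functions, then reduce the general polytope case by triangulation and inclusion--exclusion.

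First I would handle a $d$-dimensional simplex $\Delta$ with vertices $v_0, \ldots, v_d \in \mathbb{Z}^d$. Lift to the cone $C = \operatorname{cone}\bigl((v_0,1), \ldots, (v_d,1)\bigr) \subset \mathbb{R}^{d+1}$, so that lattice points of $t\Delta \cap \mathbb{Z}^d$ correspond to lattice points in $C$ whose final coordinate equals $t$. Let $\Pi = \bigl\{\sum_{i=0}^d \lambda_i (v_i,1) : 0 \le \lambda_i < 1\bigr\}$ be the half-open fundamental parallelepiped spanned by the cone generators. Since the generators are linearly independent, every lattice point of $C$ has a unique decomposition $p + \sum_i n_i (v_i,1)$ with $p \in \Pi \cap \mathbb{Z}^{d+1}$ and $n_i \in \mathbb{Z}_{\ge 0}$. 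Summing over heights gives
\[
\sum_{t \ge 0} |t\Delta \cap \mathbb{Z}^d|\, z^t \;=\; \frac{h(z)}{(1-z)^{d+1}},
\]
where $h(z) = \sum_{p \in \Pi \cap \mathbb{Z}^{d+1}} z^{\operatorname{height}(p)}$ is a polynomial of degree at most $d$, since the height of any point of $\Pi$ is strictly less than $d+1$ and is an integer for a lattice point. Expanding the right-hand side via $(1-z)^{-(d+1)} = \sum_t \binom{t+d}{d} z^t$ shows that $|t\Delta \cap \mathbb{Z}^d|$ agrees for every $t \in \mathbb{N}$ with a polynomial in $t$ of degree at most $d$.

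To pass to a general lattice polytope $P$, I would triangulate $P$ into $d$-dimensional lattice simplices (for example, iteratively coning from a chosen vertex) and apply inclusion--exclusion across the faces of the triangulation. Inductively on dimension, each face contributes a polynomial in $t$, so $|tP \cap \mathbb{Z}^d|$ agrees with a polynomial in $t$ of degree at most $d$. For the leading coefficient, associating the half-open unit cube $v + [0,1)^d$ to each lattice point $v \in tP$ gives a Riemann-sum approximation $t^{-d}\, |tP \cap \mathbb{Z}^d| \to \vol(P)$ as $t \to \infty$ (the error is controlled by the surface of $tP$, which scales like $t^{d-1}$); combined with polynomiality this forces $a_d = \vol(P)$.

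The main obstacle will be the bookkeeping in the triangulation reduction: lower-dimensional faces are shared between adjacent simplices, and the inclusion--exclusion must be set up carefully so that the polynomial degree bound is preserved when summing contributions. A slicker alternative is to triangulate into \emph{half-open} simplices (in the spirit of Brion), whose dilates partition $tP$ disjointly, so that $L_P(t)$ is a direct sum of half-open simplex counts and no inclusion--exclusion is required.
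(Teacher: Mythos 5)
The paper does not prove this statement at all: it is quoted as a classical result (Ehrhart's theorem) with a pointer to the book of Beck and Robins, so there is no in-paper argument to compare against. Your proposal is the standard proof of that classical result --- indeed essentially the one in the cited reference --- and it is sound: the cone-over-a-simplex decomposition into a half-open fundamental parallelepiped plus nonnegative integer combinations of the lifted generators gives the generating function $h(z)/(1-z)^{d+1}$ with $\deg h \le d$, hence polynomiality for simplices; triangulation (or, more cleanly, a half-open decomposition) handles general lattice polytopes; and the Riemann-sum comparison with $\vol(tP)=t^d\vol(P)$, with boundary error $O(t^{d-1})$, pins down the leading coefficient. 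Three small points to spell out in a complete write-up: (a) the inclusion--exclusion (or half-open) step also needs polynomiality for \emph{lower-dimensional} lattice simplices, which your cone argument does supply, since affinely independent vertices lift to linearly independent generators and the resulting degree is the dimension of the face, so the overall degree bound $d$ survives the summation; (b) the identity ``coefficient of $z^t$ in $z^j/(1-z)^{d+1}$ equals $\binom{t-j+d}{d}$'' holds as a polynomial identity for \emph{all} $t\in\mathbb N$ only because that polynomial vanishes at $t=0,\dots,j-1$ when $j\le d$, which is the reason the count agrees with a single polynomial for every $t$ and not merely for large $t$; (c) your leading-coefficient argument assumes $P$ is full-dimensional --- if $\dim P<d$ then $\vol(P)=0$ and the degree bound already forces $a_d=0$, so the statement as quoted still holds, and in any case the paper only applies the theorem to the full-dimensional zonotope $Z$.
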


\begin{proof}[Proof of Theorem~\ref{t:main}, upper bound]
Let $|Z(t)| = \sum_{i=0}^d a_it^i$ be the Ehrhart polynomial of $Z$.
Since $|Z(t)| = (1+o(1))\vol(Z)t^d$, we have
\[
t = (1+o(1))\left( \frac {|Z(t)|} {\vol(Z)} \right)^{1/d}.
\]
Hence, 
\begin{align*}
\eb(Z(t)) & = \sum_{i=1}^k \eb_i(Z(t)) \leq \sum_{i=1}^k \eb_i(Z(t) + \{0, u_1\} + \cdots + \{0, u_{i-1}\})
\\           & = \sum_{i=1}^k |Z(t) + \{0, u_1\} + \cdots + \{0, u_{i}\}| - |Z(t) + \{0, u_1\} + \cdots + \{0, u_{i-1}\}|
\\           & = |Z(t) + Z_0| - |Z(t)| = |Z(t+1)| - |Z(t)|
\\           & = \sum_{i=0}^d a_i ((t+1)^i - t^i)
               = (1+o(1))\vol(Z) d t^{d-1}
\\           & = (1+o(1))\vol(Z) d \left( \frac {|Z(t)|} {\vol(Z)} \right)^{(d-1)/d}
\\           & = (1+o(1)) d \vol(Z)^{1/d} |Z(t)|^{(d-1)/d},
\end{align*}
as $t$ grows large.
Now for $n \in \mathbb N$ with $n > |Z(1)|$, let $t$ be least with $|Z(t)| \geq n$.
Then
\begin{align*}
|Z(t)| - n \leq |Z(t)| - |Z(t-1)| \leq (1+o(1)) d \vol(Z)^{1/d} |Z(t-1)|^{(d-1)/d} = o(n),
\end{align*}
by the argument above, and so
\begin{align*}
\ebopt(n) & \leq \ebopt(|Z(t)|) \leq \eb(Z(t)) = (1+o(1)) d \vol(Z)^{1/d} |Z(t)|^{(d-1)/d} 
\\ & \leq (1+\delta) d \vol(Z)^{1/d} n^{(d-1)/d},
\end{align*}
for any $\delta > 0$ and $n$ sufficiently large, depending on $\delta$ and $Z$.
\end{proof}

\section{Open Problems} \label{s:open-problems}
Theorem \ref{t:main} gives an approximate answer to the edge isoperimetric problem on $G_{\mathcal{U}}$, saying that no shape can do much better than scalings of the zonotope $Z$.
However, as mentioned in the introduction, for many specific $\mathcal{U}$ much more is known. 
In particular, in all examples where a previous edge or vertex isoperimetric result is known, there is a nested sequence of optimal sets. In these cases, the optimal sets interpolate between integer scalings of the polytopes suggested by Theorems \ref{t:main} and \ref{t:ruzsa} by sequentially `filling in faces' of the optimal set. It is not inconceivable that a similar ordering could exist in every case.

\begin{question}
Let $\mathcal U = \{u_1, \ldots, u_k\}$ be a finite set of non-zero vectors that generate $\mathbb Z^d$ as a group. 
\begin{itemize}
\item Is there always an ordering $v_1, v_2, \ldots$ of $\mathbb Z^d$ such that $\eb_{G_{\mathcal U}}(\{v_1, \ldots, v_n\}) = \ebopt_{G_{\mathcal U}}(n)$?
\item Is there always an ordering $w_1, w_2, \ldots$ of $\mathbb Z^d$ such that $\partial_{v,G_{\mathcal U}}(\{w_1, \ldots, w_n\}) = \partial^*_{v,G_{\mathcal U}}(n)$?
\end{itemize}
\end{question}

Even if this isn't true, we could ask instead whether the optimal edge boundary is in fact achieved by the shapes giving the `natural' upper bounds, scalings of the zonotopes $Z$ for the edge boundary and the convex hulls $U$ for the vertex boundary. For the continuous analogues in Section \ref{s:continuous} it is known that equality holds for these shapes (and only these shapes).

\begin{question}
Let $\mathcal U = \{u_1, \ldots, u_k\}$ be a finite set of non-zero vectors that generate $\mathbb Z^d$ as a group.
Let $Z$ be the convex hull of $\{0, u_1\} + \cdots + \{0, u_k\}$ in $\mathbb R^d$, and let $Z(t) = (t\cdot Z) \cap \mathbb Z^d$. 
Let $U$ be the convex hull of $\mathcal U \cup \{0\}$ in $\mathbb R^d$, and let $U(t) = (t\cdot U) \cap \mathbb Z^d$.
\begin{itemize}
\item Is there an infinite sequence of $t_i \in \mathbb N$ such that $\eb_{G_{\mathcal U}}(Z(t_i)) = \ebopt_{G_{\mathcal U}}(|Z(t_i)|)$?
\item Is there an infinite sequence of $t_j \in \mathbb N$ such that $\partial_{v,G_{\mathcal U}}(U(t_j)) = \partial^*_{v,G_{\mathcal U}}(|U(t_j)|)$? 
\end{itemize}
\end{question}

Finally, for extremal problems in combinatorics it is often interesting to ask if a \emph{stability} result holds: if the boundary of a set is close to optimal, must the set be structurally close to some member of an optimal family? For example, there has recently been much interest in stability results for the edge isoperimetric inequality in the hypercube (see \cite{E11, EKL16, KL17a, KL17b}).
\begin{question}
Let $\mathcal U = \{u_1, \ldots, u_k\}$ be a finite set of non-zero vectors that generate $\mathbb Z^d$ as a group.
\begin{itemize}
\item If $\eb(S) \approx \ebopt(|S|)$ must $S$ be close to some $Z(t)$?
\item  If $\vb(S) \approx \vbopt(|S|)$ must $S$ be close to some $U(t)$?
\end{itemize}
\end{question}
We note that some stability results are known for the geometric results of Section \ref{s:continuous} (see for example \cite{F15}).

\section*{Acknowledgments} 
We would like to thank the anonymous referee for carefully reading an earlier draft of this paper and making a number of helpful comments.

\bibliographystyle{amsplain}

\begin{thebibliography}{99}

\bibitem{BR07}
M.~Beck and S.~Robins.
\newblock {\em Computing the continuous discretely}.
\newblock Springer, 2007.

\bibitem{B1967}
A.~J. Bernstein.
\newblock Maximally connected arrays on the $n$-cube.
\newblock {\em SIAM Journal on Applied Mathematics}, 15:1485--1489, 1967.

\bibitem{BL1991}
B.~Bollob\'as and I.~Leader.
\newblock Compressions and isoperimetric inequalities.
\newblock {\em J. Comb. Theory, Ser. A}, 56:47--62, 1991.

\bibitem{BL19912}
B.~Bollob\'as and I.~Leader.
\newblock Edge-isoperimetric inequalities in the grid.
\newblock {\em Combinatorica}, 11(4):299--314, 1991.

\bibitem{BV2009}
Ulrik Brandes and Daniel Fleischer.
\newblock Vertex bisection is hard, too.
\newblock {\em J. Graph Algorithms Appl.}, 13:119--131, 2009.

\bibitem{B96}
Peter Brass.
\newblock Erd{\H{o}}s distance problems in normed spaces.
\newblock {\em Comp. Geom.}, 6(4):195--214, 1996.

\bibitem{E11}
D.~Ellis.
\newblock Almost isoperimetric subsets of the discrete cube.
\newblock {\em Combin. Probab. Comput.}, 20(03):363--380, 2011.

\bibitem{EKL16}
D.~Ellis, N.~Keller, and N.~Lifshitz.
\newblock On the structure of subsets of the discrete cube with small edge
  boundary.
\newblock {\em arXiv preprint arXiv:1612.06680}, 2016.

\bibitem{F15}
A.~Figalli.
\newblock Stability results for the {B}runn--{M}inkowski inequality.
\newblock In {\em Colloquium De Giorgi 2013 and 2014}, pages 119--127.
 Springer, 2015.

\bibitem{F1987}
P.~Frankl.
\newblock {\em The shifting technique in extremal set theory}.
\newblock Cambridge University Press, 1987.

\bibitem{GJS1976}
M.R. Garey, D.S. Johnson, and L.~Stockmeyer.
\newblock Some simplified {NP}-complete graph problems.
\newblock {\em Theoretical Computer Science}, 1(3):237 -- 267, 1976.

\bibitem{H1964}
L.~H. Harper.
\newblock Optimal assigments of numbers to vertices.
\newblock {\em SIAM Journal on Applied Mathematics}, 12:131--135, 1964.

\bibitem{H1966}
L.~H. Harper.
\newblock Optimal numberings and isoperimetric problems on graphs.
\newblock {\em Journal of Combinatorial Theory}, 1(3):385--393, 1966.

\bibitem{H1999}
L.~H. Harper.
\newblock On an isoperimetric problem for {H}amming graphs.
\newblock In {\em Proceedings of the {C}onference on {O}ptimal {D}iscrete
  {S}tructures and {A}lgorithms---{ODSA} '97 ({R}ostock)}, volume~95, pages
  285--309, 1999.

\bibitem{H04}
L.~H. Harper.
\newblock {\em Global methods for combinatorial isoperimetric problems},
  volume~90.
\newblock Cambridge {U}niversity {P}ress, 2004.

\bibitem{H1976}
S.~Hart.
\newblock A note on the edges of the $n$-cube.
\newblock {\em Discrete Math.}, 14:157--163, 1976.

\bibitem{JLR}
Svante Janson, Tomasz {\L}uczak, and Andrzej Rucinski.
\newblock {\em Random graphs}.
\newblock Wiley-Interscience Series in Discrete Mathematics and Optimization.
  Wiley-Interscience, New York, 2000.

\bibitem{JS89}
Mark Jerrum and Alistair Sinclair.
\newblock Approximating the permanent.
\newblock {\em SIAM J. Comput.}, 18(6):1149--1178, 1989.

\bibitem{KL17a}
P.~Keevash and E.~Long.
\newblock A stability result for the cube edge isoperimetric inequality.
\newblock {\em arXiv preprint arXiv:1703.10122}, 2017.

\bibitem{KL17b}
N.~Keller, and N.~Lifshitz.
\newblock Approximation of biased Boolean functions of small total influence by DNF's.
\newblock {\em arXiv preprint arXiv:1703.10116}, 2017.

\bibitem{L1964}
J.~H. Lindsey.
\newblock Assigment of numbers to vertices.
\newblock {\em American Mathematics Monthly}, 71:508--516, 1964.

\bibitem{LW49}
L.~H. Loomis and H.~Whitney.
\newblock An inequality related to the isoperimetric inequality.
\newblock {\em Bull. Amer. Math. Soc}, 55:961--962, 1949.

\bibitem{P1970}
Helmut Pl{\"u}nnecke.
\newblock Eine zahlentheoretische {A}nwendung der {G}raphentheorie.
\newblock {\em J. Reine Angew. Math.}, 243:171--183, 1970.

\bibitem{RV2012}
J.~R. Radcliffe and E.~Veomett.
\newblock Vertex isoperimetric inequalities for a family of graphs on
  $\bb{Z}^k$.
\newblock {\em Elec. J. Combin.}, 19(2):45, 2012.

\bibitem{R1995}
I.~Z. Ruzsa.
\newblock Sets of sums and commutative graphs.
\newblock {\em Studia Sci. Math. Hungar.}, 30(1--2):127--148, 1995.

\bibitem{S13}
R.~Schneider.
\newblock {\em Convex bodies: the Brunn--Minkowski theory}.
\newblock Number 151. Cambridge University Press, 2013.

\bibitem{SS96}
M.~Sipser and D.~Spielman.
\newblock Expander codes.
\newblock {\em IEEE Transactions on Information Theory}, 42(6):1710--1722,
  1996.

\bibitem{TV2017}
E.~Tsukerman and E.~Veomett.
\newblock A general method to determine limiting optimal shapes for
  edge-isoperimetric inequalities.
\newblock {\em Elec. J. Combin.}, 24(1):\#26, 2017.

\bibitem{WW1977}
D.~Wang and P.~Wang.
\newblock Discrete isoperimetric problems.
\newblock {\em SIAM Journal of Applied Mathematics}, 32(4):860--870, 1977.

\end{thebibliography}


\begin{dajauthors}
\begin{authorinfo}[benbarber]
  Ben Barber\\
  School of Mathematics, University of Bristol and\\
  Heilbronn Institute for Mathematical Research, Bristol\\
  b.a.barber\imageat{}bristol\imagedot{}ac\imagedot{}uk \\
  \url{http://babarber.uk}
\end{authorinfo}
\begin{authorinfo}[joshuaerde]
  Joshua Erde\\
  Fachbereich Mathematik, Universit\"{a}t Hamburg\\
  Hamburg, Germany\\
  joshua.erde\imageat{}uni-hamburg\imagedot{}de
\end{authorinfo}

\end{dajauthors}

\end{document}